\documentclass[a4paper, 12pt]{article}

\usepackage[a4paper]{geometry}
\usepackage{amsmath, amsfonts, amssymb, amsthm, mathrsfs, mathtools}
\usepackage[footnotehyper]{nicematrix}
\usepackage{csquotes}
\usepackage{indentfirst}
\usepackage{chngcntr}
\usepackage{enumitem}
\usepackage{titlesec}
\usepackage{caption}
\usepackage{cmap}
\usepackage[T1]{fontenc}
\usepackage{floatrow}
\usepackage{algpseudocode}
\usepackage{algorithm}
\usepackage{lmodern}
\usepackage[colorlinks, linkcolor = purple, citecolor = purple]{hyperref}
\usepackage{orcidlink}

\mathtoolsset{showonlyrefs=true}
\titleformat{\section}
 {\normalfont\fontsize{16}{16}\bfseries}{\thesection}{1em}{}

\DeclareFontEncoding{LS2}{}{\noaccents@}
\DeclareFontSubstitution{LS2}{stix}{m}{n}
\DeclareSymbolFont{arrows3}{LS2}{stixtt}{m}{n}
\DeclareMathSymbol{\squareulblack}{\mathord}{arrows3}{"88}

\newcommand*{\eqdef}{\mathop{\overset{\mathrm{def}}{\resizebox{\widthof{\ensuremath{\mathop{\overset{\mathrm{def}}{=}}}}}{\heightof{=}}{=}}}}

\theoremstyle{definition}

\newtheorem{proposition}{Proposition}
\newtheorem{lemma}{Lemma}
\newtheorem{theorem}{Theorem}
\newtheorem{problem}{Problem}
\newtheorem{remark}{Remark}
\newtheorem{corollary}{Corollary}[theorem]
\newtheorem*{definition*}{Definition}
\newtheorem*{lemma*}{Lemma}
\newtheorem*{theorem*}{Theorem}
\newtheorem*{corollary*}{Corollary}

\DeclareMathOperator*{\tr}{tr}
\DeclareMathOperator*{\rank}{rank}
\DeclareMathOperator*{\diag}{diag}
\DeclareMathOperator*{\adj}{adj}
\DeclareMathOperator*{\im}{Im}
\DeclareMathOperator*{\argmin}{argmin}

\renewcommand{\geq}{\geqslant}
\renewcommand{\leq}{\leqslant}

\newcommand{\R}{\mathbb{R}}
\newcommand{\bP}{\mathbb{P}}
\newcommand{\E}{\mathbb{E}}
\newcommand{\vol}{\mathrm{Vol}}

\newcommand{\cS}{\mathcal{S}}
\newcommand{\cR}{\mathcal{R}}
\newcommand{\cT}{\mathcal{T}}

\newcommand{\normxi}[1]{\Vert{#1}\Vert_\xi}
\newcommand{\normf}[1]{\Vert{#1}\Vert_F}
\newcommand{\norms}[1]{\Vert{#1}\Vert_2}
\newcommand{\matdec}[1]{\mathrm{#1}}

\newcommand{\regpinv}[2]{#1^{\dagger(#2)}}

\makeatletter
\newcommand{\MultL}[1]{%
    \parbox[t]{\dimexpr\linewidth-\ALG@tlm-1em}{\raggedright #1\strut}
}
\makeatother

\algblockdefx[Phase]{Phase}{EndPhase}[1]{\textbf{#1}}{}
\algtext*{EndPhase}

\begin{document}

\title{Subset selection for matrices by volume sampling}

\author{
  \orcidlink{0009-0008-3133-064X} Ivan Kozyrev%
  \thanks{Marchuk Institute of Numerical Mathematics of Russian Academy of Sciences, Gubkina St., Moscow, 119333, Russia}
  \thanks{Moscow Institute of Physics and Technology, Institutskiy Per., Dolgoprudny, Moscow Region, 141701, Russia}
  \and
  \orcidlink{0000-0002-9956-8223} Alexander Osinsky%
  \thanks{Skolkovo Institute of Science and Technology, Bolshoy Blvd., Moscow, 121205, Russia}
  \footnotemark[1]
}

\maketitle

\begin{abstract}
We address the Subset selection problem for matrices, where the goal is to select a subset $\mathcal{S}$ of $k$ column indices from a \enquote{short-and-fat} matrix $X \in \mathbb{R}^{m \times n}$, such that the sampled submatrix $X_{\mathcal{S}}$ has $\|X_{\mathcal{S}}^\dag X\|_F$ as small as possible. Our approach is centered on volume sampling, which attains the tightest known bound on this objective in expectation. As our primary contribution, we propose a new deterministic algorithm, Forward derandomized volume sampling (FDVS), which provably attains this bound and has asymptotic complexity $O(nkm)$. In contrast, the complexity of all previously known algorithms with this guarantee scales at least quadratically in $n$ when $k \ll n$, making FDVS particularly attractive for very wide matrices. In addition, we systematically structure the landscape of volume sampling methods: we propose a simple $O(nm^2)$ algorithm for exact forward volume sampling, show that a known deterministic method is in fact a derandomization of Reverse iterative volume sampling, and derive a modification of the latter that avoids repeated SVD downdating, along with a fast greedy forward variant. The algorithms are verified and compared in numerical experiments involving optimal experimental design, sensor placement, and DLRA-DEIM.\vspace{6pt}
    
\noindent \textbf{Keywords:} subset selection, volume sampling, greedy algorithms, low-rank matrix approximations, optimal experimental design, sensor placement, DEIM, DLRA-DEIM.\vspace{6pt}
    
\noindent \textbf{AMS subject classifications:} 65F55, 90C27, 68W20, 62K05, 65F20.
\end{abstract}

\section{Introduction}

\subsection{Subset selection for matrices}

This paper addresses the problem of finding highly non-degenerate submatrices within a given matrix $X \in \R^{m \times n}$ (where $m < n$ and often $m \ll n$). The problem is formally defined as follows:

\begin{problem}\label{prm:subset_selection}
Given a full-rank matrix $X \in \R^{m \times n}$ with $m < n$ and a sampling parameter $k \in \overline{m,n} \eqdef \{m, m + 1, \dots, n\}$, find a set of column indices $\cS_{opt} \subseteq \overline{1, n}$ such that $|\cS_{opt}| \leq k$, $\rank(X_{\cS_{opt}}) = m$, and $\normxi{X_{\cS_{opt}}^\dag X}$ is minimized. Formally,
\[
\cS_{opt} \in \argmin_{\cS \in \mathcal{F}(X, k)} \normxi{X_\cS^\dag X}\,,
\]
where $\mathcal{F}(X, k) =\left\{\cS \subseteq \overline{1,n} \,:\, |\cS| \leq k \text{ and } \rank \left( X_\cS \right) = m \right\}$ and $\xi$ can denote either the Frobenius $(\xi = F)$ or the spectral $(\xi = 2)$ norm.
\end{problem}

In this article, we focus on the Frobenius norm version of this problem and its relation to the notion of volume and volume sampling.

The matrix $X_\cS^\dag X$ possesses an intuitive geometric interpretation. Its $j$-th column represents the minimal-length vector of coefficients required to express the $j$-th column of $X$ as a linear combination of the columns of $X_\cS$. Therefore, minimizing $\normxi{X_\cS^\dag X}$ corresponds to finding a submatrix whose columns can most stably represent all the columns of $X$.

Currently, no polynomial-time algorithm is known to find the exact optimum $\cS_{opt}$. Furthermore, in the case where $\xi = F$ and $k = m$, it has been proven that the problem is NP-hard to approximate within a fixed constant~\cite{doi:10.1287/moor.2021.1129}. This theoretically justifies the need for efficient approximation techniques.

\subsection{Applications}

Problem~\ref{prm:subset_selection} has numerous applications across statistics, machine learning, and numerical linear algebra. We broadly group these applications into two categories based on the specific objective function they seek to minimize.

The first category minimizes exactly $\normxi{X_\cS^\dag X}$. This objective arises naturally in least-squares regression~\cite{BOUTSIDIS2014273}, rank-deficient least squares~\cite{10.1145/1132973.1132981, doi:10.1137/090780882}, preconditioning~\cite{Arioli2015-eb}, low-rank approximation~\cite{Osinsky2023-bg, doi:10.1137/140977898, GOREINOV19971}, DEIM~\cite{deim}, DLRA-DEIM~\cite{carrel2025, nakao2026adaptiveconservativelowrankimex, DEKTOR2025143}, graph signal processing~\cite{chen2015discrete, tsitsvero2016signals, ZHANG2025109668}, sensor placement~\cite{sensor_placement} and the search for low-stretch spanning trees~\cite{faster_subset_selection}.

Conversely, the second category involves optimizing the classical objective $\normxi{X_\cS^\dag}$~\cite{faster_subset_selection}. This is prevalent in optimal experimental design~\cite{Huan_Jagalur_Marzouk_2024, doi:10.1287/moor.2021.1129} and the search for subgraphs with high algebraic connectivity~\cite{Fiedler1973, lamperskisimple}. Note, however, that even in experimental design there are formulations (V-, G- and D-\hspace{0pt}optimality) that do not require the classical objective. Moreover, even when the goal is to minimize the spectral norm of the pseudoinverse (E-optimality), it is common to use algorithms that operate on the orthonormal factor $Q$~\cite{faster_subset_selection} (where $X = Y Q$, $Y$ is invertible, and $Q$ has orthonormal rows), since in this case
\[
\frac{\norms{X_\cS^\dag}}{\norms{X^\dag}} = \frac{\norms{Q_\cS^\dag Y^{-1}}}{\norms{Q^T Y^{-1}}} = \frac{\norms{Q_\cS^\dag Y^{-1}}}{\norms{Y^{-1}}} \leq \norms{Q_\cS^\dag} = \norms{X_\cS^\dag X}\,.
\]
Thus, E-optimal design can also be approached with algorithms from the first category, and any bound on $\norms{X_\cS^\dag X}$ carries over to $\norms{X_\cS^\dag} / \norms{X^\dag}$.

\subsection{Volume and volume sampling}\label{sec:intro_vol}

The volume of an $m \times k$ ($k \geq m$) submatrix $X_\cS$ is defined by 
\[
\vol^2(X_\cS) \eqdef \det (X_\cS X_\cS^T)\,.
\]
This concept is intrinsically connected to the objective of the Frobenius norm version of Problem~\ref{prm:subset_selection}. Indeed, for any full-rank $X_\cS$,
\begin{equation}\label{eqn:frob_through_vol}
\normf{X_\cS^\dag X}^2 = \frac{1}{k - m + 1}\left(\sum_{r \in \cS,\ s \notin \cS - r}\frac{\vol^2(X_{\cS -r + s})}{\vol^2(X_\cS)} - (k - m)(n - k + 1 - m)\right)\,,
\end{equation}
where we denote $\cS - j \eqdef \cS \setminus \{j\}$ and $\cS + j \eqdef \cS \cup \{j\}$. We prove~\eqref{eqn:frob_through_vol} in Appendix~\ref{app:frob_through_vol}.

From~\eqref{eqn:frob_through_vol}, one can observe that a submatrix whose volume is larger than that of all its neighbors $X_{\cS -r + s}$ satisfies~\cite{Osinsky23, osinsky2026subsetselectionmatricescolumn}
\begin{equation}\label{eqn:maxvol_bound}
    \normf{X_\cS^\dag X}^2 \leq m \frac{n - m + 1}{k - m + 1}\,.
\end{equation}
Alternatively, one can sample submatrices with probabilities proportional to their squared volume. This is called volume sampling, and it guarantees~\cite{derezinski2018reverse}
\[
\E\normf{X_\cS^\dag X}^2 = m \frac{n - m + 1}{k - m + 1}\,.
\]
As we show in the present article, volume sampling can be derandomized, again providing~\eqref{eqn:maxvol_bound}. Note that~\eqref{eqn:maxvol_bound} is the tightest known achievable upper bound on $\normf{X_\cS^\dag X}^2$. To our knowledge, all algorithms that reach this bound (even approximately) are based on one of these two ideas. 

\subsection{Our contributions}

Our primary contribution is FDVS (Forward derandomized volume sampling, Algorithm~\ref{alg:fdvs}) for the Frobenius norm version of Problem~\ref{prm:subset_selection}. It deterministically reaches bound~\eqref{eqn:maxvol_bound} in $O(nkm)$ operations. Previously, only Algorithm~1 in~\cite{faster_subset_selection} and Algorithm~2 in~\cite{li2017polynomial} provably reached~\eqref{eqn:maxvol_bound} in polynomial time, and their complexity is at least quadratic in $n$ in the common setting $k \ll n$. This makes FDVS an attractive choice for extremely large $n$.

Additionally, we systematically structure the landscape of volume sampling methods (see Table~\ref{tbl:alg_summary}):
\begin{itemize}
    \item We describe FIVS (Forward iterative volume sampling, Algorithm~\ref{alg:forward_vol_sample}), a simple algorithm that performs exact volume sampling in $O(nm^2)$ operations by selecting columns one at a time.
    \item We show that Algorithm~2 in~\cite{faster_subset_selection} is a derandomization of RIVS (Reverse iterative volume sampling, Algorithm~3 in~\cite{derezinski2018reverse}), and give an implementation of it (RDVS, Algorithm~\ref{alg:lq_frob_rem}) that avoids $\matdec{SVD}$ downdating while keeping the same complexity.
    \item Frobenius selection (Algorithm~\ref{alg:frob_sel}) turns the column removal procedure of RDVS into a fast $O(nkm)$ selection procedure with a very small constant.
\end{itemize}

Together with RIVS~\cite{derezinski2018reverse}, these algorithms complete the picture of volume sampling, covering all four regimes: forward and reverse, randomized and derandomized.

Lastly, we compare the algorithms numerically on optimal experimental design, sensor placement, and DLRA-DEIM (Section~\ref{sec:experiments}). The newly developed Frobenius selection is the fastest way to obtain a submatrix with near-best $\normf{X_\cS^\dag X}$, while FDVS provides the best available guarantees at the same asymptotic cost. Section~\ref{sec:conclusion} summarizes the resulting practical guidelines for choosing an algorithm.

\subsection{Related work}

The development of fast algorithms for Problem~\ref{prm:subset_selection} has been an active research area in recent years. For a comprehensive survey, see~\cite{faster_subset_selection}. Newer algorithms designed primarily for the spectral norm version of Problem~\ref{prm:subset_selection} include methods based on the barrier potential~\cite{subset_spectral} and interlacing polynomials~\cite{subset_selection_with_fixed_blocks, xu2025newlowerboundsminimum}. A study of various heuristics with good empirical performance is presented in~\cite{doi:10.1137/19M1307391}. Of separate interest are the Lebesgue-type bounds for Problem~\ref{prm:subset_selection} obtained in~\cite{allen2021near, pmlr-v99-madan19a}.

Fast algorithms specifically designed for the Frobenius norm version of Problem~\ref{prm:subset_selection}, along with our new algorithms, are listed in Table~\ref{tbl:alg_summary}.

\begin{table}[htbp]
\renewcommand{\arraystretch}{1.15}
\resizebox{\linewidth}{!}{%
\begin{NiceTabular}{|l|c|c|}
\hline
Method & Bound on $\normf{X_\cS^\dag X}^2$ & Complexity \\
\hline
\Block{1-3}{\textit{Existing algorithms}}\\
\hline

\Block{2-1}{Algorithm~2 in~\cite{faster_subset_selection}} & \Block{2-1}{$m\frac{n - m + 1}{k - m + 1}$} & \Block{2-1}{$O(nm^2 + nm(n - k))$\footref{fnt:svd_downdating}\footnotetext[1]{\label{fnt:svd_downdating}The stated complexities of FDVS and Algorithm~2 in~\cite{faster_subset_selection} rely on fast multiplication by a Cauchy-like matrix of eigenvectors, see the proof of Theorem~\ref{thm:vol_derand}; in Algorithm~2 in~\cite{faster_subset_selection} it arises through $\matdec{SVD}$ downdating~\cite{doi:10.1137/S0895479893251472}. Its hidden constant is polylogarithmic in the target precision, and structured multiplication outperforms the direct one only at $m \sim 10^3$~\cite{doi:10.1137/130945995}. Our implementation uses direct multiplication, which gives $O(nkm^2)$ for FDVS in practice.}} \\
& & \\
\hline

\Block{2-1}{RIVS~\cite{derezinski2018reverse}} & \Block{2-1}{$m\frac{n - m + 1}{k - m + 1}$ in expectation}  & \Block{2-1}{$O(nm^2)$ in expectation} \\
& & \\
\hline

\Block{2-1}{Dominant~\cite{Osinsky23}, $c > 1$} & \Block{4-1}{$m\frac{n - m + 1}{k - m + 1} + \frac{(c^2 - 1)k(n - k)}{k - m + 1}$} & \Block{2-1}{$O(nkm \log_ck)$} \\
& & \\
\cline{1-1}\cline{3-3}

\Block{2-1}{Dominant-split~\cite{osinsky2026subsetselectionmatricescolumn}, $c > 1$} & & \Block{2-1}{$O(nkm + nm^2 \log\log m$ \\ $+\, nm^2 / \log c)$} \\
& & \\
\hline

\Block{1-3}{\textit{New algorithms}}\\
\hline
\Block{2-1}{FDVS (Algorithm~\ref{alg:fdvs})\footref{fnt:equivalence_to_CCA}\footnotetext[2]{\label{fnt:equivalence_to_CCA}In the case $k = m$, FDVS is equivalent to Frobenius selection, see Remark~\ref{rem:fdvs_frob_sel_equivalence}. The latter is just Algorithm~2 in~\cite{OSINSKY2025} in that case.}} & \Block{4-1}{$m\frac{n - m + 1}{k - m + 1}$} & \Block{2-1}{$O(nkm)$\footref{fnt:svd_downdating}} \\
& & \\
\cline{1-1}\cline{3-3}
\Block{2-1}{RDVS (Algorithm~\ref{alg:lq_frob_rem})} & & \Block{2-1}{$O(nm^2 + nm(n - k))$\footref{fnt:no_svd_downdating}\footnotetext[3]{\label{fnt:no_svd_downdating}In contrast to Algorithm~2 in~\cite{faster_subset_selection}, RDVS avoids $\matdec{SVD}$ downdating and the corresponding large hidden constant.}} \\
& & \\
\hline
\Block{2-1}{FIVS (Algorithm~\ref{alg:forward_vol_sample})} & \Block{2-1}{$m\frac{n - m + 1}{k - m + 1}$ in expectation} & \Block{2-1}{$O(nm^2)$\footref{fnt:fast_vs}\footnotetext[4]{\label{fnt:fast_vs} If the input matrix has orthonormal rows, or if its orthonormal factor is precomputed, the complexity can be reduced to $O(nm + m^3\log m)$ in expectation via rejection sampling~\cite{pmlr-v99-derezinski19b, barthelme2023fastersamplerdiscretedeterminantal}.}} \\
& & \\
\hline
\Block{2-1}{Frobenius selection (Algorithm~\ref{alg:frob_sel})\footref{fnt:equivalence_to_CCA}} & \Block{2-1}{$\frac{m^2}{k}(n - m + 1)$} & \Block{2-1}{$O(nkm)$} \\
& & \\
\hline
\end{NiceTabular}%
}
\caption{Summary of modern approximation algorithms for the Frobenius norm version of Problem~\ref{prm:subset_selection}. All algorithms accept arbitrary $k \in \overline{m, n}$.}
\label{tbl:alg_summary}
\end{table}

We omit well-known algorithms such as Maxvol~\cite{maxvol} or sRRQR~\cite{osti_276532} from Table~\ref{tbl:alg_summary}. They only accept $k = m$ and are equivalent to the Dominant~\cite{Osinsky23} algorithm restricted to that case. Another related algorithm is Householder-based maxvol2~\cite{Mikhalev2015, osinsky2019rectangularmaximumvolumeprojective}, which greedily selects columns maximizing the volume of the submatrix. This approach, however, does not provide any known guarantees except when columns may be selected with repetition~\cite{pmlr-v99-madan19a}.

\subsubsection{Volume sampling algorithms}

We discuss algorithms related to volume sampling separately, as they are of special interest in the present article.

Polynomial time algorithms for forward volume sampling and its derandomization in the $k > m$ setting were first proposed in~\cite{li2017polynomial}. There, the authors use techniques similar to ours, but do not optimize the algorithms, resulting in complexities of $O(n^4 k)$ for exact volume sampling, $\tilde{O}(n k^3 m^2)$ for approximate volume sampling, and $O(n^4 k m)$ for the derandomized version.

The complexity of volume sampling was subsequently reduced by Dereziński and Warmuth~\cite{derezinski2018reverse}, who proposed RIVS. Additionally, Dereziński, Warmuth and Hsu~\cite{derezinski2022unbiased} decomposed volume sampling for $k > m$ into volume sampling of $m$ columns followed by uniform sampling of the remaining $k - m$, and proposed distortion-free intermediate sampling, a technique that accelerates volume sampling when $n \gg m^2$. Instead of sampling from all $n$ columns, it draws an intermediate pool of $O(m^2)$ columns according to approximate leverage scores, and accepts the whole pool with a probability that corrects the approximation exactly. An arbitrary exact sampler (e.g.\ RIVS or FIVS) can then be applied to the pool, so that its cost no longer depends on $n$. The resulting complexity of obtaining the pool is $O(nm\log n + m^4\log m)$ with probability $1 - \delta$, the hidden constant being logarithmic in $\delta^{-1}$.

\subsection{Organization}

The remainder of the paper is organized as follows. Section~\ref{sec:fdvs} presents our main contribution, the FDVS algorithm, together with the theory connecting Problem~\ref{prm:subset_selection} to the column subset selection problem. Section~\ref{sec:fivs} introduces the simple exact volume sampling method FIVS. Section~\ref{sec:frob_improve} revisits Algorithms~1 and~2 in~\cite{faster_subset_selection}, interprets them as derandomized versions of volume sampling, and removes the hidden logarithmic factor from their running time. Section~\ref{sec:frob_sel} develops the fast Frobenius selection algorithm and discusses the limitations of greedy selection. Finally, Section~\ref{sec:experiments} reports numerical experiments in the settings of optimal experimental design, sensor placement, and DLRA-DEIM. Sections~\ref{sec:fivs}--\ref{sec:frob_sel} are largely independent and may be read in any order.

\section{Forward derandomized volume sampling algorithm}\label{sec:fdvs}

The core idea of the algorithm presented in this section is closely related to the column subset selection problem (CSS/CSSP), where the goal is to select $k$ columns from an $M \times N$ matrix $A$ in order to minimize the Frobenius norm error of the column approximation $\normf{A - \bP _{A_\cS}A}$. Here, $\cS$ is a set of selected column indices of cardinality $k$, and $\bP_{A_\cS} = A_\cS A_\cS^\dag$ is the orthogonal projector onto the span of the columns of $A$ indexed by $\cS$. Note that in this problem $k \leq \rank A \leq \min\{M, N\}$.

In~\cite{Osinsky2023-bg}, Osinsky highlighted the connection between CSSP and Problem~\ref{prm:subset_selection}. In particular, he made the following remark, which we replicate here for convenience:

\begin{remark}[Remark~3 in~\cite{Osinsky2023-bg}]
If one has an algorithm that constructs a column matrix approximation, then applying it to 
\begin{equation}\label{eqn:A_def}
    A = 
    \begin{pNiceMatrix}[hvlines, margin, cell-space-limits = 3pt]
    \Block{1-1}{Q} \\
    \Block{1-1}{\varepsilon I_{n \times n}} \\
    \end{pNiceMatrix}\,,\ \text{where}\ \varepsilon > 0\ \text{and}\ Q \in \R^{m \times n}\ (m < n\,,\ QQ^T = I)\,,
\end{equation}
provides a method that finds $Q_\cS \in \R^{m \times k}$, such that
\begin{equation}\label{eqn:ssfm_through_cssp}
    1 + \frac{\normf{Q_\cS^\dag}^2 - k}{n - m} \leq \frac{\normf{A - \bP_{A_\cS} A}^2}{\varepsilon^2 (n - m)} + O(\varepsilon^2)\,.
\end{equation}
Therefore, algorithms that construct column approximations also allow finding highly non-degenerate submatrices.
\end{remark}

However, to make this approach competitive, we first need to answer several questions:
\begin{enumerate}
    \item Which column approximation algorithm provides the tightest bounds on the pseudoinverse of the selected submatrix?
    \item Problem~\ref{prm:subset_selection} requires the selected submatrix to have full rank. Will this condition be satisfied for the proposed method?
    \item Directly constructing matrix $A$~\eqref{eqn:A_def} and running the CSSP algorithm on it is impractical. How can we utilize the structure of $A$ to make the algorithm computationally efficient?
\end{enumerate}

We now answer the first question. A good choice here is to use an algorithm based on the derandomization of volume sampling, e.g., Algorithm~4 in~\cite{5671202} or Algorithm~1 in~\cite{doi:10.1137/19M1281848} (the latter is a more stable version of the former). These algorithms not only provide bounds of the form $\normf{A - \bP_{A_\cS} A}^2 \leq (k + 1)\normf{A - A_k}^2$, where $A_k$ is the truncated $\matdec{SVD}$ of $A$ of rank $k$, i.e., its best rank-$k$ approximation in the Frobenius norm, but also allow for finer guarantees based on the singular spectrum of the matrix $A$.

The key idea in these algorithms is to use the conditional mathematical expectation
\begin{equation}\label{eqn:E_A_def}
\E_\cS(A) \eqdef \E\left(\normf{A - \bP_{A_\cT} A}^2\ \vert\ \cS \subseteq \cT \subseteq \overline{1, n}\,,\ \vert \cT \vert = k\right),
\end{equation}
where the subsets of indices $\cT$, $\vert \cT \vert = k$, are sampled proportionally to the squared volume of a matrix $A_\cT$, $\det(A_\cT^T A_\cT)$. Lemma~21 in~\cite{5671202} yields the explicit formula for this conditional expectation:
\begin{equation}\label{eqn:E_S_A}
\E_\cS(A) = (k + 1)\left\vert\frac{c_{n - k + t - 1}\left(BB^T\right)}{c_{n - k + t}\left(BB^T\right)}\right\vert = (k + 1)\left\vert\frac{c_{n - k + t - 1}\left(B^TB\right)}{c_{n - k + t}\left(B^TB\right)}\right\vert\,,
\end{equation}
where $B = A - \bP_{A_\cS}A$, $t = \vert \cS \vert \leq k$, and $c_i \equiv c_i(B^TB)$ denotes the $i$-th coefficient of the characteristic polynomial:
\[
\det\left(\lambda I - B^TB\right) = c_0 + c_1\lambda + \dots + c_n\lambda^n\,.
\]

Algorithm~4 in~\cite{5671202} and Algorithm~1 in~\cite{doi:10.1137/19M1281848} build the set $\cS$ iteratively, at each step selecting an index $j \notin \cS$ that minimizes $\E_{\cS + j}(A)$. In the end, the obtained set $\cS$ of cardinality $k$ satisfies
\begin{equation}\label{eqn:cssp_bound_c}
\normf{A - \bP_{A_\cS} A}^2 \leq \E_\varnothing(A) = (k + 1) \left\vert \frac{c_{n - k - 1}\left(A^TA\right)}{c_{n - k}\left(A^TA\right)} \right\vert\,.
\end{equation}

In the following proposition, we derive the explicit bound for $\normf{Q_\cS^\dag}^2$ in~\eqref{eqn:ssfm_through_cssp}, provided we use a CSS algorithm that provides a bound almost as tight as~\eqref{eqn:cssp_bound_c}.

\begin{proposition}\label{stt:frob_error_exp}
    Let $A$ be a matrix of the form~\eqref{eqn:A_def}. Suppose some column approximation algorithm guarantees $\normf{A - \bP_{A_\cS} A}^2 \leq \E_\varnothing(A) + O(\varepsilon^4)$ for $\vert \cS \vert = k$, where $m \leq k \leq n$. Then
    \[
    \normf{Q_\cS^\dag}^2 \leq m\frac{n - m + 1}{k - m + 1} + O(\varepsilon^2)\,.
    \]
\end{proposition}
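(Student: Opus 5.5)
The plan is to compute $\E_\varnothing(A)$ explicitly from~\eqref{eqn:cssp_bound_c} as an expansion in $\varepsilon$. I would then substitute it into the Remark's inequality~\eqref{eqn:ssfm_through_cssp} and solve for $\normf{Q_\cS^\dag}^2$.

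\emph{Spectrum of $A^TA$.} Since $QQ^T = I$, we have $A^TA = Q^TQ + \varepsilon^2 I_n$. Its eigenvalues are $1+\varepsilon^2$ with multiplicity $m$ and $\varepsilon^2$ with multiplicity $n-m$. For a positive semidefinite matrix, $|c_{n-j}(A^TA)| = e_j$, the $j$-th elementary symmetric polynomial of the eigenvalues. Hence
\[
\E_\varnothing(A) = (k+1)\,\frac{e_{k+1}}{e_k},\qquad e_j = \sum_{i}\binom{m}{i}\binom{n-m}{j-i}(1+\varepsilon^2)^i\varepsilon^{2(j-i)}.
\]

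\emph{Expansion in $\varepsilon$.} For $j \geq m$, the lowest power of $\varepsilon$ comes from the term $i = m$. This gives
\[
e_j = \varepsilon^{2(j-m)}\left(\binom{n-m}{j-m} + O(\varepsilon^2)\right).
\]
The leading coefficient is nonzero for $m \le j \le n$. Therefore
\[
\frac{e_{k+1}}{e_k} = \varepsilon^2\,\frac{\binom{n-m}{k+1-m}}{\binom{n-m}{k-m}} + O(\varepsilon^4) = \varepsilon^2\,\frac{n-k}{k-m+1} + O(\varepsilon^4).
\]
For $k = n$ both sides are $0$, consistent with $e_{n+1}=0$. Consequently
\[
\normf{A - \bP_{A_\cS}A}^2 \le \varepsilon^2\,\frac{(k+1)(n-k)}{k-m+1} + O(\varepsilon^4).
\]

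\emph{Substitution and algebra.} Dividing by $\varepsilon^2(n-m)$ and using~\eqref{eqn:ssfm_through_cssp}, the $O(\varepsilon^4)$ terms become $O(\varepsilon^2)$. We get
\[
\normf{Q_\cS^\dag}^2 \le k - (n-m) + \frac{(k+1)(n-k)}{k-m+1} + O(\varepsilon^2).
\]
It remains to verify the polynomial identity $(k-n+m)(k-m+1) + (k+1)(n-k) = m(n-m+1)$. Expanding the left side term by term, the $k^2$ terms cancel, and the $kn$ terms cancel. The terms linear in $k$ with coefficients $m$ and $-m$ also cancel. What remains is $mn - m^2 + m$, which proves the identity. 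Dividing by $k-m+1$ yields the stated bound $m\frac{n-m+1}{k-m+1} + O(\varepsilon^2)$.

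The main subtlety, rather than the algebra, is justifying that the Remark's inequality applies, i.e.\ that $Q_\cS$ has full rank $m$. I would argue this directly. Suppose $\rank Q_\cS < m$. Then some unit vector $u \in \R^m$ is orthogonal to the columns of $Q_\cS$. The vector $(u^T, 0)^T$ then lies at distance $O(\varepsilon)$ from the orthogonal complement of $\operatorname{span} A_\cS$. Since $\normf{(u^TQ,\,0)}=1$, the residual $\normf{A - \bP_{A_\cS}A}^2$ is bounded below by a constant independent of $\varepsilon$. This contradicts the $O(\varepsilon^2)$ upper bound for small enough $\varepsilon$. So the selected submatrix is full rank, and~\eqref{eqn:ssfm_through_cssp} applies.
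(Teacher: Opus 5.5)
Your proposal is correct and follows the paper's proof. You expand $\E_\varnothing(A)$ through elementary symmetric polynomials of the eigenvalues of $A^TA$ ($m$ equal to $1+\varepsilon^2$, $n-m$ equal to $\varepsilon^2$), keep the terms containing all $m$ large eigenvalues to get $\varepsilon^2(k+1)(n-k)/(k-m+1) + O(\varepsilon^4)$, and substitute into~\eqref{eqn:ssfm_through_cssp}; the paper handles your added full-rank check separately in Proposition~\ref{stt:full_rank} via a series expansion of the regularized pseudoinverse, while your direct argument is sharper than you state: $(u^T,0)^T$ is exactly orthogonal to the columns of $A_\cS$, so $\normf{A - \bP_{A_\cS}A}^2 \geq \norms{u^TQ}^2 = 1$ for every $\varepsilon$.
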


\begin{proof}
Let us transform~\eqref{eqn:cssp_bound_c} to compute $\E_\varnothing(A)$. The coefficients of the characteristic polynomial of $A^TA$ can be expressed using elementary symmetric polynomials of the singular values of $A$, $\sigma_1(A)\geq\dots\geq\sigma_n(A)$:
\[
c_i\left(A^TA\right) = (-1)^{n - i}e_{n - i}(\sigma) = (-1)^{n - i}\sum_{1 \leq j_1 < \dots < j_{n - i} \leq n} \sigma_{j_1}^2(A)\cdot\ldots\cdot\sigma_{j_{n - i}}^2(A)\,.
\]
Substituting this into~\eqref{eqn:cssp_bound_c}, we obtain
\begin{equation}\label{eqn:cssp_bound}
\E_\varnothing(A) = (k + 1)\frac{\sum_{i_1 < \dots < i_{k + 1}}\sigma_{i_1}^2(A)\cdot\ldots\cdot\sigma_{i_{k + 1}}^2(A)}{\sum_{i_1 < \dots < i_{k}}\sigma_{i_1}^2(A)\cdot\ldots\cdot\sigma_{i_{k}}^2(A)}\,.
\end{equation}

Note that $A^TA = \varepsilon^2 I + Q^T Q$, and thus the singular spectrum of $A$ consists of precisely $m$ singular values equal to $\sqrt{1 + \varepsilon^2}$ and $n - m$ singular values equal to $\varepsilon$. Thus, in the right-hand side of~\eqref{eqn:cssp_bound}, the sums in the numerator and denominator are dominated by the terms that contain all $m$ \enquote{large} singular values of $A$:
\begin{align}\label{eqn:E_nothing}
\E_\varnothing(A) &= (k + 1)\frac{(1 + \varepsilon^2)^m\sum_{i_1 < \dots < i_{k - m + 1}}\varepsilon^{2(k - m + 1)} + O\left(\varepsilon^{2(k - m + 2)}\right)}{(1 + \varepsilon^2)^m\sum_{i_1 < \dots < i_{k - m}}\varepsilon^{2(k - m)} + O\left(\varepsilon^{2(k - m + 1)}\right)}\\
&=\varepsilon^2 (k + 1)\frac{\binom{n - m}{k - m + 1}}{\binom{n - m}{k - m}} + O(\varepsilon^4) = \varepsilon^2\frac{(k + 1)(n - k)}{k - m + 1} + O(\varepsilon^4)\,.
\end{align}

Remembering that $\normf{A - \bP_{A_\cS} A}^2 \leq \E_\varnothing(A) + O(\varepsilon^4)$ and substituting it into~\eqref{eqn:ssfm_through_cssp}, we obtain
\[
\normf{Q_\cS^\dag}^2 \leq \frac{(k + 1)(n - k)}{k - m + 1} - (n - k) + m + O(\varepsilon^2) = m\frac{n - m + 1}{k - m + 1} + O(\varepsilon^2)\,.\qedhere
\]
\end{proof}

It is easy to show that the bound $\normf{A - \bP_{A_\cS} A}^2 \leq \E_\varnothing(A) + O(\varepsilon^4) = O(\varepsilon^2)$ also guarantees that $Q_\cS$ has full rank for sufficiently small $\varepsilon$.

\begin{proposition}\label{stt:full_rank}
    Let $A$ be a matrix of the form~\eqref{eqn:A_def}, and suppose that $Q_\cS \in \R^{m \times k}$ with $k \geq m$ is rank-deficient. Then $\normf{A - \bP_{A_\cS} A}^2 \geqslant {\rm const} > 0$ as $\varepsilon \to 0$.
\end{proposition}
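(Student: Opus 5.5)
The idea is to lower-bound the residual by looking only at the top block $Q$ of $A$, and to use the rank deficiency of $Q_\cS$ to exhibit a direction that the selected columns cannot reach.

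Since $Q_\cS$ is rank-deficient, $\rank Q_\cS \leq m-1$. Hence there is a unit vector $u \in \R^m$ with $u^T Q_\cS = 0$. Put $w = (u^T, 0)^T \in \R^{m+n}$. For each $j \in \cS$ we have
\[
w^T A_{:,j} = u^T Q_{:,j} = 0\,,
\]
so $w$ is orthogonal to the column space of $A_\cS$. Therefore $\bP_{A_\cS} w = 0$, i.e.\ $(I - \bP_{A_\cS}) w = w$.

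Next we use that $I - \bP_{A_\cS}$ is an orthogonal projector whose range contains $w$. Projecting further onto $\mathrm{span}(w)$ can only decrease the Frobenius norm, so
\[
\normf{A - \bP_{A_\cS} A}^2 = \normf{(I - \bP_{A_\cS})A}^2 \geq \normf{w w^T (I - \bP_{A_\cS}) A}^2 = \Vert w^T A\Vert_2^2\,.
\]
The last equality uses $w^T(I - \bP_{A_\cS}) = w^T$, which follows from symmetry of the projector. Finally, $w^T A = u^T Q$, and since $QQ^T = I$ we get
\[
\Vert u^T Q\Vert_2^2 = u^T Q Q^T u = 1\,.
\]
Thus $\normf{A - \bP_{A_\cS} A}^2 \geq 1$ for every $\varepsilon > 0$, so the statement holds with ${\rm const} = 1$.

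The only step needing care is the construction of $w$. It must annihilate the selected columns of $A$ in their entirety, including the $\varepsilon I$ block, and the bound must not degrade as $\varepsilon \to 0$. Padding $u$ with zeros in the bottom block handles both. The rest is routine projector algebra.

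Combined with Proposition~\ref{stt:frob_error_exp}, this shows that the guarantee $\normf{A - \bP_{A_\cS} A}^2 = O(\varepsilon^2)$ forces $Q_\cS$ to have full rank once $\varepsilon$ is small enough.
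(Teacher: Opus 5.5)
Your proof is correct, and it takes a genuinely different route from the paper's. The paper starts from the explicit block form of $B = (I - \bP_{A_\cS})A$, taken from the proof of Theorem~3 in~\cite{Osinsky2023-bg}, whose top block is $(I - Q_\cS \regpinv{Q_\cS}{\varepsilon})Q_\cR$. It then expands the Tikhonov-regularized projector as $Q_\cS\regpinv{Q_\cS}{\varepsilon} = \bP_{Q_\cS} - \varepsilon^2 (Q_\cS^\dag)^T Q_\cS^\dag + O(\varepsilon^4)$. This shows that the top block tends to $(I - \bP_{Q_\cS})Q_\cR$, which is nonzero when $\rank Q_\cS < m$. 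So positivity is obtained only in the limit, with an implicit constant.

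You instead use a single test vector. Since $w = (u^T, 0)^T$ is orthogonal to the columns of $A_\cS$ for every $\varepsilon$, it lies in the range of $I - \bP_{A_\cS}$. Projecting onto $\mathrm{span}(w)$ then gives $\normf{A - \bP_{A_\cS}A}^2 \geq \norms{u^T Q}^2 = 1$. This avoids both the block formula and the perturbation expansion, and it is non-asymptotic. That makes its later use in the proof of Theorem~\ref{thm:vol_derand} immediate, where a rank-deficient completion is compared against $\E_\cS(A) = O(\varepsilon^2)$.

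If you replace $u$ by an orthonormal basis $U_\perp$ of the left null space of $Q_\cS$, the same argument gives $\normf{A - \bP_{A_\cS}A}^2 \geq \normf{U_\perp^T Q}^2 = m - \rank Q_\cS$. This is exactly the limit $\normf{(I - \bP_{Q_\cS})Q_\cR}^2 = \tr(I - \bP_{Q_\cS})$ of the paper's expression, so your estimate is sharp as $\varepsilon \to 0$.

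What the paper's route buys is reuse: the block form~\eqref{eqn:B_expr} and the expansion~\eqref{eqn:Q_Q_dag_series} are the ingredients it needs again in Lemma~\ref{lem:vol_derand_lim}.
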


\begin{proof}
    Without loss of generality, assume that $\cS = \overline{1, k}$, and let $\cR = \overline{1, n} \setminus \cS$ denote the complementary set of indices. We begin by computing the explicit form of the matrix $B = A - \bP_{A_\cS}A$. To do this, we compute $A_\cS^\dag = (A_\cS^T A_\cS)^{-1}A_\cS^T$ and the resulting projection $\bP_{A_\cS}A$ using the block form of $A$. As the detailed derivation is already established in the proof of Theorem~3 in~\cite{Osinsky2023-bg}, we omit it here.

    For $B$, we obtain
    \begin{equation}\label{eqn:B_expr}
B = \left( I - \bP_{A_\cS}\right)A = 
\begin{pNiceMatrix}[vlines, margin, cell-space-limits=3pt]
\Block{2-1}{0_{m \times k}} 
    & \Block{2-1}{\left(I - Q_\cS \regpinv{Q_\cS}{\varepsilon}\right) Q_\cR} \\
    & \\ \hline
\Block{2-1}{0_{k \times k}} 
    & \Block{2-1}{- \varepsilon\regpinv{Q_\cS}{\varepsilon} Q_\cR} \\
    & \\ \hline
\Block{2-1}{0_{(n - k) \times k}} 
    & \Block{2-1}{\varepsilon I_{(n - k) \times (n - k)}} \\
    &  
\end{pNiceMatrix}\,,
\end{equation}
    where $\regpinv{Q_\cS}{\varepsilon} = ( Q_\cS^T Q_\cS + \varepsilon^2 I )^{-1} Q_\cS^T$ is the Tikhonov-regularized pseudoinverse.

    Now, consider the block $(I - Q_\cS \regpinv{Q_\cS}{\varepsilon}) Q_\cR$. We analyze it by computing the series expansion of $Q_\cS \regpinv{Q_\cS}{\varepsilon}$:
    \begin{align}\label{eqn:Q_Q_dag_series}
        Q_\cS\regpinv{Q_\cS}{\varepsilon} &= Q_\cS\left(Q_\cS^TQ_\cS + \varepsilon^2 I\right)^{-1}Q_\cS^T\\
        &= Q_\cS\left(Q_\cS^TQ_\cS + \varepsilon^2 I\right)^{-1}\left(Q_\cS^TQ_\cS + \varepsilon^2 I\right) Q_\cS^\dag - \varepsilon^2Q_\cS\left(Q_\cS^TQ_\cS + \varepsilon^2 I\right)^{-1}Q_\cS^\dag\\
        &= Q_\cS Q_\cS^\dag - \varepsilon^2 \left(Q_\cS^\dag\right)^T Q_\cS^\dag + O\left(\varepsilon^4\right)\,.
    \end{align}
    Thus, we have $(I - Q_\cS \regpinv{Q_\cS}{\varepsilon}) Q_\cR = (I - Q_\cS Q_\cS^\dag) Q_\cR + O(\varepsilon^2)$. In the case $\rank Q_\cS < m$, $(I - Q_\cS Q_\cS^\dag) Q_\cR = (I -\bP_{Q_\cS}) Q_\cR \neq 0$, as $Q_\cR$ must have at least one column not in the column span of $Q_\cS$. From this we obtain $\normf{A - \bP_{A_\cS} A}^2 \geqslant \normf{B}^2 > 0$ as $\varepsilon \to 0$.
\end{proof}

Now we proceed to the last question, which is about making the proposed method computationally efficient. To this end, we express $\E_\cS(A)$ up to $O(\varepsilon^4)$ in terms of $Q_\cS$.

\begin{lemma}\label{lem:vol_derand_lim}
    Let $A$ be a matrix of the form~\eqref{eqn:A_def}. Take $k \in \overline{m, n}$ and $\cS \subseteq \overline{1, n}$ as a subset of column indices with cardinality $t \leq k$. Let $r = \rank Q_\cS$ satisfy $m - r \leq k - t$.
    
    Then, for $B = (I - \bP_{A_\cS}) A$, the coefficients of the characteristic polynomial of $B^TB$ satisfy
    \begin{equation}\label{eqn:c_limit}
        \left\vert\frac{c_{n - k + t - 1}\left(B^TB\right)}{c_{n - k + t}\left(B^TB\right)}\right\vert = \varepsilon^2\left\vert\frac{c_{k - t + r - m + 1}(p_\cS)}{c_{k - t + r - m}(p_\cS)}\right\vert + O(\varepsilon^4)\,,
    \end{equation}
    where $c_j(p)$ denotes the $j$-th coefficient of the polynomial $p_{\cS}(x)$ given by
    \begin{equation}\label{eqn:p_definition}
        p_{\cS}(x) = (x - 1)^{n - m - t}\prod_{i = 1}^{r}\left(x - \sigma_i^2(Q_\cS)\right)\,.
    \end{equation}
    Note that while $n - m - t$ can be negative, $p_\cS(x)$ is always a polynomial. Its degree is $d = n - t + r - m$, so $k - t + r - m = d - n + k$; Algorithm~\ref{alg:fdvs} uses the latter form, where $d$ is maintained explicitly.
\end{lemma}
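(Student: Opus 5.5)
The plan is to compute the spectrum of $B^TB$ up to relative accuracy $O(\varepsilon^2)$, and then read off the ratio of characteristic coefficients from elementary symmetric functions. Take $\cS$ to be the first $t$ indices and $\cR$ its complement. As in~\eqref{eqn:B_expr}, with $t$ in place of $k$, the columns of $B$ indexed by $\cS$ vanish, so $B^TB$ has $t$ zero eigenvalues. The remaining eigenvalues are those of
\[
G \eqdef B_\cR^TB_\cR = Q_\cR^T\bigl(I - Q_\cS\regpinv{Q_\cS}{\varepsilon}\bigr)^2Q_\cR + \varepsilon^2 Q_\cR^T\bigl(\regpinv{Q_\cS}{\varepsilon}\bigr)^T\regpinv{Q_\cS}{\varepsilon}Q_\cR + \varepsilon^2 I_{n-t}\,.
\]
Up to sign, $c_{n-k+t-1}(B^TB)$ and $c_{n-k+t}(B^TB)$ are $e_{k-t+1}$ and $e_{k-t}$ of the spectrum of $G$. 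The series~\eqref{eqn:Q_Q_dag_series}, together with its analogue $\regpinv{Q_\cS}{\varepsilon} = Q_\cS^\dag + O(\varepsilon^2)$ on the range of $Q_\cS$, gives
\[
G = G_0 + \varepsilon^2 G_1 + O(\varepsilon^4)\,,\qquad G_0 = Q_\cR^T(I - \bP_{Q_\cS})Q_\cR\,.
\]
Here $\rank G_0 = m - r$, because $Q$ has full row rank.

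Next I would apply degenerate perturbation theory (equivalently, a Schur complement with respect to $\ker G_0$). $G$ then has $m - r$ eigenvalues of order $1$, and $n - t - m + r = d$ eigenvalues of the form $\varepsilon^2\mu_i + O(\varepsilon^4)$. The $\mu_i$ are the eigenvalues of the compression of $G_1$ onto $\ker G_0$; the first-order cross terms between $G_0$ and $\varepsilon^2 G_1$ contribute only at order $\varepsilon^4$ to these small eigenvalues. The key computation uses $Q_\cS Q_\cS^T + Q_\cR Q_\cR^T = I$. Writing the SVD $Q_\cS = U\Sigma V^T$ with $r$ positive singular values, on the range of $Q_\cS$ one gets
\[
Q_\cS^\dag Q_\cR Q_\cR^T (Q_\cS^\dag)^T = V\bigl(\Sigma^{-2} - I\bigr)V^T\,.
\]
From this, the compression of $G_1 = I + Q_\cR^T(Q_\cS^\dag)^TQ_\cS^\dag Q_\cR$ to $\ker G_0$ should have eigenvalues $\sigma_i^{-2}(Q_\cS)$, $i = 1,\dots,r$, and $1$ with multiplicity $n - m - t$; the total count is $d$. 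These eigenvalues are exactly the reciprocals of the roots of $p_\cS$ in~\eqref{eqn:p_definition}. When $n - m - t < 0$, some $\sigma_i = 1$ cancel, which is consistent with $p_\cS$ being a polynomial.

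Then I would expand. Set $j = k - t + r - m \geq 0$, which holds by the hypothesis $m - r \leq k - t$. The dominant terms of $e_{k-t+1}$ and $e_{k-t}$ over the spectrum of $G$ are those that include all $m - r$ large eigenvalues, exactly as in the proof of Proposition~\ref{stt:frob_error_exp}. The product of the large eigenvalues cancels in the ratio, giving
\[
\varepsilon^2\,\frac{e_{j+1}(\mu)}{e_j(\mu)} + O(\varepsilon^4)\,.
\]
Finally, with $\rho_i = 1/\mu_i$ the roots of $p_\cS$, we have $e_{j+1}(1/\rho)/e_j(1/\rho) = e_{d-j-1}(\rho)/e_{d-j}(\rho) = \bigl|c_{j+1}(p_\cS)/c_j(p_\cS)\bigr|$, which is~\eqref{eqn:c_limit}. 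For this step I need $e_j(\mu) > 0$, which holds since all $\mu_i > 0$.

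I expect the main obstacle to be the rigorous perturbation step. It has two parts. First, I must prove that the small eigenvalues are $\varepsilon^2\mu_i + O(\varepsilon^4)$ with the $\mu_i$ coming from the compression. Second, I must identify that compression precisely, since $\ker G_0$ is not simply the range of $Q_\cR^T(Q_\cS^\dag)^T$. I would first try to diagonalize explicitly. Decompose $\R^{n-t}$ into the range of $Q_\cR^T(I - \bP_{Q_\cS})$ (dimension $m - r$), the range of $Q_\cR^TU_r$ (dimension at most $r$), and the orthogonal complement of the range of $Q_\cR^T$ (on which $G = \varepsilon^2 I$ exactly). Verifying the block structure in this basis should reduce everything to an $r \times r$ computation in the singular basis of $Q_\cS$.
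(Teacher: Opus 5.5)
Your proposal is correct, and its outer structure matches the paper's proof. Your $G$ is the paper's $C$, your split $G_0 + \varepsilon^2 G_1$ is its $M_1 + (M_2 + \varepsilon^2 I)$, and the dominance argument for $e_{k-t+1}$, $e_{k-t}$ together with $e_q(\mu) = e_{d-q}(1/\mu)\prod_i \mu_i$ is the same.

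The difference is in how the $d$ small eigenvalues are isolated. You reach for generic degenerate perturbation theory. That would need two things: a quadratic-residual (Schur complement) bound showing that an $O(\varepsilon^2)$ cross block moves the small eigenvalues by only $O(\varepsilon^4)$, and a separate computation of the compression of $G_1$.

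The paper avoids both by noting that $\im M_1 \perp \im M_2$, which follows from $Q_\cR Q_\cR^T = I - Q_\cS Q_\cS^T$. Hence $\im M_2 \subseteq \ker G_0$, so the cross block of $G_1$ is exactly zero and the compression is a plain restriction. Weyl's inequality applied to the $O(\varepsilon^4)$ remainder then suffices, so the obstacle you anticipate does not actually arise.

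Your fallback decomposition is this same observation in sharper form, and it goes through. Let $u_i$ be the columns of $U_r$. The subspaces $W_1 = \im Q_\cR^T(I - \bP_{Q_\cS})$, $W_2 = \im Q_\cR^T U_r$ and $W_3 = \ker Q_\cR$ are mutually orthogonal and $G$-invariant. On them $G$ acts, respectively, as:
\begin{itemize}
\item $(1+\varepsilon^2)I$ on $W_1$;
\item $Q_\cR^T u_i \mapsto \varepsilon^2\frac{1+\varepsilon^2}{\sigma_i^2+\varepsilon^2}\,Q_\cR^T u_i$ on $W_2$, with $\norms{Q_\cR^T u_i}^2 = 1 - \sigma_i^2$;
\item $\varepsilon^2 I$ on $W_3$.
\end{itemize}
This gives the spectrum of $G$ exactly, with no perturbation argument at all.

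The same decomposition also proves the point you only call ``consistent''. Since $\dim W_1 = m - r$ and $W_1 \oplus W_2 \subseteq \R^{n-t}$, we get $\dim W_2 = \#\{i : \sigma_i < 1\} \le d$. So when $n - m - t < 0$, at least $r - d$ singular values of $Q_\cS$ equal $1$, and $p_\cS$ is indeed a polynomial.
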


\begin{proof}
Without loss of generality, assume that $\cS = \overline{1, t}$, and let $\cR = \overline{1, n} \setminus \cS$ denote the complementary set of indices. We proceed in four steps:

\begin{enumerate}[wide]
\item First, we analyze the structure of $B^TB$. Using the expression for $B = A - \bP_{A_\cS}A$~\eqref{eqn:B_expr} we obtain
\[
B^TB = \begin{pNiceMatrix}[vlines, margin, cell-space-limits = 3pt]
       \Block{2-1}{0_{t \times t}} & \Block{2-1}{0_{t \times (n - t)}} \\
       & \\ \hline
       \Block{2-1}{0_{(n - t) \times t}} & \Block{2-1}{C}  \\
       &
       \end{pNiceMatrix}\,,
\]
where
\begin{align}\label{eqn:matrix_C}
    C &= Q_\cR^T\left(I - \left(\regpinv{Q_\cS}{\varepsilon}\right)^TQ_\cS^T\right)\left(I - Q_\cS \regpinv{Q_\cS}{\varepsilon}\right)Q_\cR + \varepsilon^2 Q_\cR^T \left(\regpinv{Q_\cS}{\varepsilon}\right)^T\regpinv{Q_\cS}{\varepsilon}Q_\cR + \varepsilon^2 I \nonumber \\
    &= Q_\cR^T \left[ I - Q_\cS \regpinv{Q_\cS}{\varepsilon} - \left(\regpinv{Q_\cS}{\varepsilon}\right)^TQ_\cS^T + \left(\regpinv{Q_\cS}{\varepsilon}\right)^T \left(Q_\cS^T Q_\cS + \varepsilon^2 I \right) \regpinv{Q_\cS}{\varepsilon}\right] Q_\cR + \varepsilon^2 I \nonumber \\
    &= Q_\cR^T \left[ I - Q_\cS \regpinv{Q_\cS}{\varepsilon}\right] Q_\cR + \varepsilon^2 I\,.
\end{align}
Decomposing it even further, we substitute the series expansion of $Q_\cS \regpinv{Q_\cS}{\varepsilon}$, obtained in~\eqref{eqn:Q_Q_dag_series}, into the above:
\begin{equation}\label{eqn:C_final}
    C = \underbrace{Q_\cR^T \left( I - \bP_{Q_\cS}\right) Q_\cR}_{M_1} + \underbrace{\varepsilon^2 Q_\cR^T \left(Q_\cS^\dag\right)^T Q_\cS^\dag Q_\cR}_{M_2} + \varepsilon^2 I + O\left(\varepsilon^4\right)\,.
\end{equation}
Since $c_{n - k + t - 1}(B^TB) = c_{n - k + t - 1}(C)$ and $c_{n - k + t}(B^TB) = c_{n - k + t}(C)$, we need to examine the eigenvalues of $C$, which we do in the next item.

\item Note that $M_1$ and $M_2$ are symmetric and have orthogonal images. This is because any vector $u_1 \in \im M_1$ can be expressed as $u_1 = Q_\cR^T v_1$ for some $v_1$ satisfying $Q_\cS^T v_1 = 0$, while any $u_2 \in \im M_2$ can be expressed as $u_2 = Q_\cR^T v_2$ for some $v_2 \in \im Q_\cS$. Their orthogonality is demonstrated as follows:
\[
\langle u_1, u_2 \rangle = v_1^T Q_\cR Q_\cR^T v_2 = v_1^T \left(I - Q_\cS Q_\cS^T\right) v_2 = v_1^T v_2 - \left(Q_\cS^T v_1\right)^T \left(Q_\cS^T v_2\right) = 0\,.
\]

For symmetric matrices with orthogonal images, the spectrum of their sum is the union of their individual spectra. We can thus analyze the matrices $M_1$ and $M_2$ independently:
\begin{enumerate}
    \item Observe that $\rank M_1 = \rank Q^T(I - \bP_{Q_\cS})Q = m - r$, and for $i \in \overline{1, m - r}$,
    \[
        \lambda_i(M_1) = \sigma_i^2\left( \left(I - \bP_{Q_\cS}\right)Q_\cR \right) = \sigma_i^2\left( \left(I - \bP_{Q_\cS}\right)Q \right) = 1\,.
    \]
    \item Using the fact that the ranks of $M_1$ and $M_2$ must add up to no more than $\min\{m, n - t\}$, we can bound $\rank M_2$ by $\min \{r, d\}$, where $d = n - t + r - m$. And for $1 \leq i \leq \min\{r, d\}$, $\lambda_i(M_2) = \varepsilon^2\sigma_i^2(Q_\cS^\dag Q_\cR)$.
\end{enumerate}

Using this and the spectral stability property of eigenvalues (applied to a perturbation of order $\varepsilon^4$), we can characterize the spectrum of $C$:
\begin{enumerate}
    \item There are $m - r$ \enquote{large} eigenvalues $\lambda_i = 1 + O(\varepsilon^2)$, where $i \in \overline{1, m - r}$.
    
    \item There are $d$ eigenvalues of order $\varepsilon^2$. We denote these eigenvalues by $\mu_i$ for $i \in \overline{1, d}$, given by:
    \begin{equation}\label{eqn:mu_description}
        \mu_i = 
        \begin{cases} 
            \varepsilon^2 \sigma_i^2(Q_\cS^\dag Q_\cR) + \varepsilon^2 + O(\varepsilon^4) & \text{for } 1 \leq i \leq \min\{r, d\}\,, \\
            \varepsilon^2 + O\left(\varepsilon^4\right) & \text{for } \min\{r,d\} < i \leq d\,.
        \end{cases}
    \end{equation}
\end{enumerate}

\item We proceed by further analyzing the eigenvalues of order $\varepsilon^2$. Padding a matrix with zeros does not change the non-zero singular values; thus, for $1 \leq i \leq \min\{r, d\}$,
\[
\sigma_i^2\left(Q_\cS^\dag Q_\cR\right) + 1 = \sigma_i^2\left(Q_\cS^\dag \begin{pNiceMatrix}[hvlines] \Block{2-1}{0_{m\times t}} & \Block{2-1}{Q_\cR} \\ & \end{pNiceMatrix}\right) + 1 = \sigma_i^2\left(Q_\cS^\dag Q - Q_\cS^\dag Q_\cS\right) + 1 = \sigma_i^2\left(Q_\cS^\dag\right)\,.
\]

In the case $d < r$, the remaining singular values $\sigma_{d + 1}(Q_\cS^\dag), \dots, \sigma_r(Q_\cS^\dag)$ are identically $1$. Indeed, we may write the internal direct sum $\langle Q_\cS\rangle = V \oplus W$ by letting $W$ be the intersection $\langle Q_\cS \rangle \cap \langle Q_\cR \rangle$, and defining $V$ as the orthogonal complement of $W$ within $\langle Q_\cS \rangle$. Since $\dim W = \rank Q_\cS^\dag Q_\cR \leq \min\{r, d\} = d$ and $\dim \langle Q_\cS\rangle = \rank Q_\cS = r$, it follows that $\dim V \geq r - d$. Consequently, $Q_\cS$ (and $Q_\cS^\dag$) has at least $r - d$ singular values equal to $1$, as it acts identically to $Q$ when restricted to the subspace $V$.

As a result, the eigenvalues $\mu_i$ can be described as roots of 
\[
\hat{p}_{\cS}(x) = (x - 1)^{n - m - t}\prod_{i = 1}^{r}\left(x - \sigma_i^2(Q_\cS^\dag)\right)
\]
up to $O(\varepsilon^4)$. In the case $r \leq d$, it follows directly from~\eqref{eqn:mu_description}, and in the case $r > d$, the term $(x - 1)^{n - m - t} = (x - 1)^{d - r}$ cancels out $r - d$ eigenvalues of $Q_\cS^\dag$ that are equal to $1$, again resulting in~\eqref{eqn:mu_description}.

\item Finally, we return to the ratio of the coefficients of the characteristic polynomial of $C$. They are given by the elementary symmetric polynomials of its eigenvalues:
\begin{equation}\label{eqn:c_expression}
    \left\vert\frac{c_{n - k + t - 1}(C)}{c_{n - k + t}(C)}\right\vert =\frac{e_{k - t + 1}(\lambda, \mu)}{e_{k - t}(\lambda, \mu)} \overset{(a)}{=} \frac{e_{k - t + r - m + 1}(\mu)}{e_{k - t + r - m}(\mu)} + O\left(\varepsilon^4\right) \overset{(b)}{=} \frac{e_{n - k - 1}(1/\mu)}{e_{n - k}(1/\mu)} + O(\varepsilon^4)\,,
\end{equation} 
$(a)$ holds because in the expansion of the symmetric polynomials, the dominant terms are those that contain the product of all $m-r$ of the \enquote{large} eigenvalues $\lambda_i$. Any terms not containing this product are of order $\varepsilon^2$ smaller. $(b)$ holds because $e_q(\mu_1, \dots,\mu_p) = e_{p - q}(1/\mu_1, \dots, 1/\mu_p) \cdot \prod_{i = 1}^p \mu_i$.

Since $\sigma_i^2(Q_\cS) = 1/\sigma_i^2(Q_\cS^\dag)$, the roots of $p_\cS$~\eqref{eqn:p_definition} are the reciprocals $1/\mu$ of those of $\hat{p}_\cS$. Expressing its coefficients through the elementary symmetric polynomials of these roots gives exactly~\eqref{eqn:c_expression}, which completes the proof.\qedhere
\end{enumerate}
\end{proof}

Now we have all we need to formulate Algorithm~\ref{alg:fdvs}, which approximates Problem~\ref{prm:subset_selection}. The key idea of this algorithm is to work with the matrix $Q$ from the $\matdec{LQ}$ decomposition of the input matrix $X$, and then to build the set $\cS$ iteratively, at each step selecting the index $s$ that minimizes $\vert c_{k - t + r - m + 1}(p_{\cS + s}) / c_{k - t + r - m}(p_{\cS + s}) \vert$ among $s \notin \cS$, in the notation of Lemma~\ref{lem:vol_derand_lim}. This way, we can control the conditional expectation $\E_\cS(A)$ and eventually apply Proposition~\ref{stt:frob_error_exp}. The properties of the algorithm are formally stated in Theorem~\ref{thm:vol_derand}. Stability of the algorithm and details of its numerical implementation are discussed in Remark~\ref{rem:fdvs_stability}.

\begin{algorithm}[htbp]
\caption{Forward derandomized volume sampling (FDVS).}\label{alg:fdvs}
\renewcommand{\thefootnote}{\thempfootnote}
\begin{algorithmic}[1]
\Require $X \in \R^{m \times n}$ ($m \leq n$, $\rank X = m$), sampling parameter $k \in \overline{m, n}$.
\Ensure Subset of column indices $\cS \subset \overline{1, n}$ with $|\cS| = k$.

\Phase{Initialization:}
\State Compute thin $\matdec{LQ}$ decomposition of $X$, $X = LQ$. Save matrix $Q$.
\State Set $\cS \gets \varnothing, \cR \gets \overline{1, n}, \Lambda = \diag(\lambda) \gets 0_{m \times m}$, $r \gets 0$.
\EndPhase

\For{$t = 0, \dots, k-1$}
    \Phase{Notation:}
        \State\label{lin:d_l_def} Set $l \gets \vert\{i\,|\, 0 < \lambda_i < 1 \}\vert$\footref{fnt:tolerance}, $d = n - t + r - m$, $\delta = (l < d)$\footnote{We use $\delta \in \{0, 1\}$ because the desired factor $d - l - 1$ can be negative. In that case, we divide by $(x - 1)$ after the polynomial transformations, in~\eqref{eqn:p_j_computation}.}.
        \State\MultL{
        Let $A\in \R^{(r - l) \times n}$, $B \in \R^{l \times n}$, $C \in \R^{(m - r) \times n}$ be the top, middle and bottom rows of $Q$, respectively.
        }
        \State\MultL{
        Set $\Omega = \diag(\omega_1, \dots, \omega_l)= \diag(\lambda_{r - l + 1},\dots,\lambda_r)$\footref{fnt:sorting}.
        }
    \EndPhase
    
    \Phase{Auxiliary polynomial construction:}
        \State \MultL{
        Compute coefficients $c_{d - n + k - 1}, c_{d - n + k}, c_{d - n + k + 1}$ of $p(x) = (x - 1)^{d - l - \delta}\prod_{i = 1}^{l}(x - \omega_{i})$ up to a constant.
        }
        \State \MultL{
        Compute coefficients $c_{d - n + k - 1}, c_{d - n + k}$ of $g_i(x) = p(x)/(x - \omega_i)$ for $i \in \overline{1, l}$ up to the same constant.
        }
    \EndPhase
    
    \Phase{Characteristic polynomial construction:}
        \State \MultL{
            Compute the coefficients $c_{d - n + k}, c_{d - n + k + 1}$ of
            \begin{equation}\label{eqn:p_j_computation}
                p_{\cS + j}(x) = \frac{1}{(x - 1)^{1 - \delta}}\left[xp(x) - \sum_{i = 1}^{l} B_{ij}^2 x g_i(x) - \norms{c_j}^2p(x)\right]
            \end{equation}
            up to a constant. Case $\delta = 0$ is explained in Item~\ref{itm:char_poly_construction}.
        }
    \EndPhase

    \Phase{Greedy selection and update:}
        \State Set $s \in \argmin_{j \in \cR} \vert c_{d - n + k + 1}(p_{j}) / c_{d - n + k}(p_{j})\vert$.
        \State $\cS \gets \cS + s$, $\cR \gets \cR - s$. If $\norms{c_s}^2 > 0$\footref{fnt:tolerance}, set $r \gets r + 1$.
        \State Compute eigendecomposition update\footref{fnt:sorting}: $\Lambda + q_s q_s^T = U D U^T$.
        \State \label{lin:rotation} \MultL{%
        Update diagonal matrix and rotate the remaining columns:
        \[
            \Lambda \gets D, \quad Q_\cR \gets U^T Q_\cR\,.
        \]
        }\vspace{-1em}
    \EndPhase
\EndFor
\State \textbf{return} $\cS$
\footnotetext[2]{\label{fnt:tolerance}In practice, the strict inequalities are replaced by approximate ones for small tolerance~$\tau$.}
\footnotetext[3]{\label{fnt:sorting}We assume the eigenvalues $\lambda_i$ are sorted in non-increasing order.}
\end{algorithmic}
\end{algorithm}

\begin{theorem}\label{thm:vol_derand}
    There exists a deterministic algorithm (Algorithm~\ref{alg:fdvs}) that, given a full-rank matrix $X \in \R^{m \times n}$ with $m < n$ and a sampling parameter $k \in \overline{m, n}$, constructs a subset $\cS \subseteq \overline{1,n}$ of cardinality $k$. The algorithm ensures that $X_\cS$ has full rank, and
    \[
    \normf{X_\cS^\dag X}^2 \leq m\frac{n - m + 1}{k - m + 1}\,,\quad \norms{X_\cS^\dag X}^2 \leq 1 + \frac{m(n - k)}{k - m + 1}\,.
    \]
    The algorithm runs in $O(nkm)$ operations.
\end{theorem}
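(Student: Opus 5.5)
Our plan has three parts: correctness of the greedy rule via Lemma~\ref{lem:vol_derand_lim} and Proposition~\ref{stt:frob_error_exp}, the limit $\varepsilon\to 0$, and the complexity of the updates.

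\textbf{Reduction to $Q$.} Write $X = LQ$ with $L$ invertible. Then $X_\cS = LQ_\cS$, so $X_\cS^\dag X = Q_\cS^\dag L^{-1} L Q = Q_\cS^\dag Q$ whenever $X_\cS$ has full row rank. Since $QQ^T=I$ and $Q_\cS^\dag Q$ contains $Q_\cS^\dag Q_\cS$, it suffices to control $Q_\cS^\dag$.

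\textbf{Frobenius bound.} Form $A$ as in~\eqref{eqn:A_def}. For every $\cS$ with $|\cS|=t<k$, the conditional expectation satisfies
\[
\E_\cS(A)=\frac{1}{n-t}\cdot\frac{\sum_{j\notin\cS} w_j\,\E_{\cS+j}(A)}{\sum_j w_j},
\]
a convex combination with weights given by the conditional volume-sampling probabilities. Hence $\min_j \E_{\cS+j}(A)\le \E_\cS(A)$, and it is enough to pick the minimizer at every step.

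By Lemma~\ref{lem:vol_derand_lim} and~\eqref{eqn:E_S_A},
\[
\E_{\cS+j}(A)=\varepsilon^2 (k+1)\,\Phi(\cS+j)+O(\varepsilon^4),
\]
where $\Phi$ is the ratio of coefficients of $p_{\cS+j}$. For a fixed finite family of index sets, choosing the $\Phi$-minimizer is therefore correct for all sufficiently small $\varepsilon$. The exception is ties, which need a careful limiting argument: the $O(\varepsilon^4)$ terms could prefer a different tied index. I would handle this by observing that equality of the leading terms still preserves the inequality $\Phi(\cS+s)\le \Phi(\cS)$, where $\Phi(\cS)$ is the leading coefficient of $\E_\cS$. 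I would then chain these inequalities directly at the $\varepsilon^2$ level. This gives
\[
\varepsilon^{-2}\normf{A-\bP_{A_\cS}A}^2 \to (k+1)\Phi(\cS)\le (k+1)\Phi(\varnothing)=\lim \varepsilon^{-2}\E_\varnothing(A).
\]

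The hypothesis $m-r\le k-t$ of Lemma~\ref{lem:vol_derand_lim} must hold along the whole path. I would argue that a choice violating it makes the conditional expectation $O(1)$, by Proposition~\ref{stt:full_rank}, so it is never selected. Proposition~\ref{stt:full_rank} then gives full rank of the final $Q_\cS$, and Proposition~\ref{stt:frob_error_exp}, letting $\varepsilon\to0$, gives the Frobenius bound.

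\textbf{Spectral bound.} The matrix $Q_\cS^\dag Q$ has $m$ singular values $\sigma_i^2 = 1+\sigma_i^2(Q_\cS^\dag Q_\cR)\ge 1$. Their total is at most $m\frac{n-m+1}{k-m+1}$. The largest is therefore at most this total minus $(m-1)$, which simplifies to $1+\frac{m(n-k)}{k-m+1}$.

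\textbf{Complexity.} We must show that Algorithm~\ref{alg:fdvs} computes $p_{\cS+j}$ correctly. Rotating $Q_\cR$ into the eigenbasis of $Q_\cS Q_\cS^T=\Lambda$ makes $Q_{\cS+j}Q_{\cS+j}^T=\Lambda+q_jq_j^T$. Its characteristic polynomial, given by the secular equation (matrix determinant lemma), is
\[
\prod_i(x-\lambda_i)-\sum_i q_{ij}^2\prod_{i'\ne i}(x-\lambda_{i'}).
\]
Eigenvalues equal to $1$ and $0$ group into the $(x-1)$ factors and the $\norms{c_j}^2$ term. This yields~\eqref{eqn:p_j_computation}, with bookkeeping of $\delta$ for the degree and division by $(x-1)$.

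Per step, the cost is $O(m)$ coefficients, obtained by recurrences on the two or three needed coefficients of $p$ and $g_i$ in $O(nm)$ total. The key remaining cost is the rotation $U^TQ_\cR$. The matrix $U$ of a rank-one update has Cauchy-like structure, so fast multipole-type multiplication costs $O(nm)$ up to polylogarithmic precision factors. Summed over $k$ steps, and adding the $O(nm^2)\le O(nkm)$ LQ decomposition, the total is $O(nkm)$.

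I expect the tie-breaking and limit argument, together with verifying the $\delta$ and degree bookkeeping in~\eqref{eqn:p_j_computation}, to be the main obstacle.
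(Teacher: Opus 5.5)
Your proposal is correct in substance and follows the paper's proof essentially step for step: the embedding of $Q$ into $A$, Lemma~\ref{lem:vol_derand_lim} turning each $\E_{\cS+j}(A)$ into $\varepsilon^2$ times the computed coefficient ratio, the greedy chain down to $\E_\varnothing(A)$ closed by Propositions~\ref{stt:frob_error_exp} and~\ref{stt:full_rank}, the spectral bound as in~\eqref{eqn:spec_from_frob_bounds} applied to $Q$, and the Cauchy-like eigenvector matrix for the $O(nm)$ rotation. A few small repairs are needed: the factor $\frac{1}{n-t}$ in your averaging identity is spurious; you should chain $\E_{\cS+s}(A)\le\E_\cS(A)+O(\varepsilon^4)$ as the paper does rather than the bare ratios $\Phi$, because the prefactor in~\eqref{eqn:E_S_A} is really $k-t+1$ (at $t=k$ the definition gives $\E_\cS(A)=\normf{A-\bP_{A_\cS}A}^2=\tr(B^TB)$, not $k+1$ times it), so $\Phi(\cS+s)\le\Phi(\cS)$ can fail, whereas the $O(\varepsilon^4)$ chain is indifferent to ties; candidates $j$ with $m-\rank Q_{\cS+j}>k-t-1$ are excluded not because $\E_{\cS+j}(A)=\Theta(1)$, which the algorithm never evaluates, but because for them $\norms{c_j}=0$ and $d-n+k=0$, so $p_j=x\,p_{\cS+j}$ has $c_0(p_j)=0$ and an infinite score; and the $\delta=0$ division by $x-1$ that you flagged is resolved by the vanishing of the bracket in~\eqref{eqn:p_j_computation} at $x=1$, which gives $p_j=p+\sum_i\frac{B_{ij}^2\omega_i}{1-\omega_i}g_i$ at $O(m)$ cost per candidate.
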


\begin{proof}
    \textbf{Proof of bounds.} Consider the matrix $A$ of the form~\eqref{eqn:A_def}, constructed using the matrix $Q$ from the $\matdec{LQ}$ decomposition of $X$. In this framework, Algorithm~\ref{alg:fdvs} functions simultaneously as an approximation algorithm for Problem~\ref{prm:subset_selection} on $Q$ and as a column subset selection algorithm on $A$. We proceed in three steps:

    \begin{enumerate}[wide]
        \item First, we establish that $r$ always tracks the rank of $Q_\cS$, $\lambda$ stores the eigenvalues of $Q_\cS Q_\cS^T$ in non-increasing order, and $Q_\cR$ is expressed in an orthonormal basis where the range of $Q_\cS$ spans the first $r$ coordinates. 
    
        Proceeding by induction, observe that in the update phase, $\Lambda + q_s q_s^T$ represents the Gram matrix $Q_{\cS + s} Q_{\cS + s}^T$ in the current basis. The eigendecomposition updates $\Lambda \gets D$ and $Q_\cR \gets U^T Q_\cR$ diagonalize this matrix and align the basis with its eigenvectors. Consequently, $\norms{c_s}^2$ measures the squared projection of $q_s$ onto the orthogonal complement of $\langle Q_\cS\rangle$, ensuring that $r$ increments exactly when the rank of the selection increases.

        \item Next, we verify that the ratio $\vert c_{d - n + k + 1}(p_{j}) / c_{d - n + k}(p_{j})\vert$ agrees with the target coefficient ratio from Lemma~\ref{lem:vol_derand_lim}. We analyze the cases $\norms{c_j}^2 = 0$ and $\norms{c_j}^2 > 0$ separately.

        \begin{enumerate}[wide]
            \item Let $\norms{c_j}^2 = 0$ for a given $j \in \cR$. Then canceling out $d - l$ singular values equal to $1$ in~\eqref{eqn:p_definition} produces
            \[
            p_{\cS + j}(x) = (x - 1)^{d - l - 1} \det\left(xI - \Omega - b_jb_j^T\right)\,.
            \]
            The polynomial $p_{j}$ matches the above exactly. Indeed, the matrix determinant lemma yields:
            \begin{equation}\label{eqn:p_j_transform}
            \begin{aligned}
                p_{\cS + j}(x)
                &= (x - 1)^{d - l - 1}\det\left(xI - \Omega\right) - (x - 1)^{d - l - 1} b_j^T\adj\left(xI - \Omega\right)b_j\\
                &= \frac{1}{(x - 1)^{1 - \delta}}\left[p(x) - \sum_{i = 1}^{l} B_{ij}^2 g_i(x)\right]\,.
            \end{aligned}
            \end{equation}
            Therefore, in this case $p_j(x) = x p_{\cS + j}(x)$, which absorbs the index shift by one.

            \item Now let $\norms{c_j}^2 > 0$. In that case, after canceling $d - l$ singular values equal to~1,~\eqref{eqn:p_definition} reduces to
            \begin{equation}\label{eqn:p_case_2}
            p_{\cS + j}(x) = (x - 1)^{d - l - 1}\det\left(xI - \hat{\Omega} - \hat{b}_j\hat{b}_j^T\right)\,,
            \end{equation}
            where $\hat{\Omega} = \diag(\omega_1, \dots, \omega_l, 0)$ and $\hat{b}_j = (B_{1j}, \dots, B_{lj}, \norms{c_j})^T$. Applying the matrix determinant lemma again yields
            \begin{align*}
                p_{\cS + j}(x)
                &= (x - 1)^{d - l - 1}\det\left(xI - \hat{\Omega}\right) - (x - 1)^{d - l - 1} \hat{b}_j^T\adj\left(xI - \hat{\Omega}\right)\hat{b}_j\\
                &= \frac{1}{(x - 1)^{1 - \delta}}\left[xp(x) - \sum_{i = 1}^{l} B_{ij}^2 x g_i(x) - \norms{c_j}^2p(x)\right]\,.
            \end{align*}
            In that case $p_j(x) = p_{\cS + j}(x)$, and the indices of the target coefficients already coincide.
        \end{enumerate}
        
        \item Now we apply Lemma~\ref{lem:vol_derand_lim}. At every step where it is applicable, i.e., where $m - r \leq k - t$, it guarantees that each computed $p_{\cS + j}$ is indeed a polynomial, and states that the ratio of its coefficients relates to the expected error:
        \[
        \E_{\cS + j}(A) = \varepsilon^2\left\vert\frac{c_{d - n + k + 1}(p_{j})}{c_{d - n + k}(p_{j})}\right\vert + O(\varepsilon^4)\,.
        \]
        By the properties of conditional expectations, $\min_{j \in \cR} \E_{\cS + j}(A) \leq \E_\cS(A)$. Therefore, choosing the index $s$ that minimizes the ratio $\vert c_{d - n + k + 1}(p_{\cS + j}) / c_{d - n + k}(p_{\cS + j})\vert$ ensures $\E_{\cS + s}(A) \leq \E_\cS(A) + O(\varepsilon^4)$.

        Observe that if $m - \rank(Q_{\cS + s}) > k - t - 1$, adding $k - t - 1$ columns to $\cS + s$ cannot result in a full-rank selection. By Proposition~\ref{stt:full_rank}, $\E_{\cS + s}(A) \geqslant {\rm const} > \E_{\cS}(A)$ for sufficiently small $\varepsilon$, which contradicts $\E_{\cS + s}(A) \leq \E_\cS(A) + O(\varepsilon^4)$. Therefore, $m - r \leq k - t$ is satisfied at all steps, making the algorithm well-defined and allowing for repeated application of Lemma~\ref{lem:vol_derand_lim}.
        
        After $k$ steps, we obtain the set $\cS$ of cardinality $k$ satisfying
        \[
        \normf{A - \bP_{A_\cS}A}^2 = \E_\cS(A) \leq \E_\varnothing(A) + k O(\varepsilon^4)\,.
        \]
        By Propositions~\ref{stt:frob_error_exp} and~\ref{stt:full_rank}, $Q_\cS$ has full rank and satisfies
        \[
        \normf{Q_\cS^\dag}^2 \leq m\frac{n - m + 1}{k - m + 1} + O(\varepsilon^2)
        \]
        for arbitrary $\varepsilon > 0$. Observing that $\normf{Q_\cS^\dag}^2 = \normf{Q_\cS^\dag Q}^2 = \normf{X_\cS^\dag L^{-1}LQ}^2 = \normf{X_\cS^\dag X}^2$ and taking the limit $\varepsilon \to 0$ yields the desired bound.
    \end{enumerate}

    \textbf{Proof of complexity.} The computational cost is dominated by the initial $\matdec{LQ}$ decomposition, which requires $O(nm^2)$ operations, and the subsequent $k$ iterations of the main loop. In each iteration, we perform the following steps:
    
    \begin{enumerate}
        \item{Auxiliary polynomial construction:} Constructing the coefficients of $\prod_{i = 1}^l (x - \omega_i)$ from its $l \leq m$ roots requires $O(m^2)$ operations. The subsequent computation of auxiliary polynomials $\prod_{q = 1,\,q\neq i}^l (x - \omega_q)$ by synthetic division also takes $O(m^2)$ in total; the division must be carried out from both ends, for the reasons given in Remark~\ref{rem:fdvs_stability}. Computing three coefficients up to a constant for $p(x)$ and $g_i(x)$ then takes no more than $O(m^2)$ operations in total, since it involves convolution with consecutive binomial coefficients computed up to a constant. 

        \item{Characteristic polynomial construction:}\label{itm:char_poly_construction} In the case $\delta = 1$, the summation in~\eqref{eqn:p_j_computation} involves summing $l \le m$ scalar values for each tracked coefficient. Thus, the cost is $O(m)$ per candidate, or $O(nm)$ for the entire set $\cR$. 

        If $\delta = 0$, it follows from the proof of bounds that the polynomial in square brackets in~\eqref{eqn:p_j_computation} has a root equal to $1$. Substituting $x = 1$ into the expression yields
        \begin{align*}
        &(1 - \norms{c_j}^2)\prod_{i = 1}^l(1 - \omega_i) - \sum_{i = 1}^{l} B_{ij}^2 \prod_{q = 1,\,q\neq i}^l(1 - \omega_q) = 0\,,\\
        &\norms{c_j}^2 = 1 - \sum_{i = 1}^{l} \frac{B_{ij}^2}{1 - \omega_i}\,.
        \end{align*}
        Substituting the above into~\eqref{eqn:p_j_computation} gives
        \begin{align*}
        p_{j}(x) &= \frac{1}{x - 1}\left[xp(x) - \sum_{i = 1}^{l} B_{ij}^2 x g_i(x) - \left(1 - \sum_{i = 1}^l \frac{B_{ij}^2}{1 - \omega_i}\right)p(x)\right]\\
        &=p(x) + \frac{1}{x - 1}\sum_{i = 1}^{l}B_{ij}^2 p(x)\left[\frac{1}{1 - \omega_i} - \frac{x}{x - \omega_i}\right]\\
        &= p(x) + \sum_{i = 1}^{l}\frac{B_{ij}^2 \omega_i}{1 - \omega_i}g_i(x)\,,
        \end{align*}
        which allows computing the coefficients of $p_{j}$ from the coefficients of $p(x)$ and $g_i(x)$ in $O(m)$ per index $j \in \cR$.

        \item{Greedy selection and update:}\label{itm:remaining_col_rotation} The rank-one eigendecomposition update of $\Lambda + q_s q_s^T$ takes $O(m^2)$~\cite{doi:10.1137/S089547989223924X}. Furthermore, for the deflated eigenvalue problem, the eigenvector $\hat{u}_i$ corresponding to $\hat{d}_i$ satisfies
        \[
        \hat{u}_i \propto \left(\frac{(\hat{q}_s)_1}{\hat{\lambda}_1 - \hat{d}_i}, \dots, \frac{(\hat{q}_s)_m}{\hat{\lambda}_m - \hat{d}_i}\right)\,,
        \]
        which makes the eigenvector matrix $\hat{U}$ Cauchy-like. Indeed, 
        \[
        \hat{U} = \diag(\hat{q}_s) C \diag(\gamma)\,,\ \text{where}\ C_{ij} = \frac{1}{\hat{\lambda}_i - \hat{d}_j}\,.
        \]
        Multiplication of $Q_\cR$ by such a matrix requires $O(nm)$ operations~\cite{doi:10.1137/S0895479893251472} with a hidden constant polylogarithmic in the target precision.
    \end{enumerate}
    
    Combining all the steps, we obtain a total complexity of $O(nkm)$.\qedhere
\end{proof}

\begin{remark}\label{rem:fdvs_stability}
Three details in Algorithm~\ref{alg:fdvs} can lead to instability or overflow when implemented carelessly.

The first issue is the construction of the polynomials $\hat{p}(x) = \prod_{i = 1}^l(x - \omega_i)$ and $\hat{g}_i(x) = \hat{p}(x)/(x - \omega_i)$. The former can be built directly by sequential multiplication: all its roots are positive, its coefficients alternate in sign, and multiplication by $(x - \omega_i)$ combines two like-signed terms. The quotients $\hat{g}_i$ are another matter. Write $\omega^{(i)}$ for $\omega$ with $\omega_i$ removed, so that the coefficient of $x^{l - 1 - s}$ in $\prod_{q \neq i}(x - \omega_q)$ is $(-1)^s e_s(\omega^{(i)})$. Forward synthetic division by $(x - \omega_i)$ unrolls to
\[
e_s\left(\omega^{(i)}\right) = \sum_{j = 0}^{s} (-\omega_i)^j\, e_{s - j}(\omega)\,,
\]
a sum whose terms alternate in sign. For $s \leq i$, each $e_{s - j}(\omega)$ is dominated by products of $\omega_1, \dots, \omega_i$, all of which are at least $\omega_i$; the terms therefore do not grow, the one with $j = 0$ dominates, and the sum is evaluated to full relative accuracy. For $s > i$ the terms instead grow to order $\omega_i^{s - i}\omega_1 \cdots \omega_i$, far above the result, and the accuracy is lost. For the backward recurrence, the situation is the opposite: the coefficients with $s \geq i$ are computed accurately. We therefore run the forward recurrence for $s \leq i$ and the backward one for $s > i$.

The second issue is that the coefficients of $\hat{p}$ and $\hat{g}_i$ decay to $\prod_{i = 1}^l \omega_i$ while the binomial coefficients of $(x - 1)^{d - l - \delta}$ grow, and for $l$ of a few hundred both leave the exponent range of double precision although their products do not. We therefore carry every coefficient already scaled by the binomial coefficient it will eventually meet, propagating only the ratios $w(a)/w(a + 1)$ of consecutive weights.

Lastly, the stability of the rank-one update $\Lambda + q_s q_s^T = UDU^T$ is established in~\cite{doi:10.1137/S089547989223924X}; the tolerance $\tau$ is set to the square root of machine epsilon.
\end{remark}

To see why Algorithm~\ref{alg:fdvs} indeed constitutes a forward derandomization of volume sampling, observe that the probability of selecting the subset $\cS$ in volume sampling for CSSP is proportional to $\det(A_\cS^T A_\cS)$. Substituting the structure of $A$, we find:
\[
    \det\left(A_\cS^T A_\cS\right) = \det\left(Q_\cS^T Q_\cS + \varepsilon^2 I\right) = \varepsilon^{2(k - m)}\det\left(Q_\cS Q_\cS^T\right) + O\left(\varepsilon^{2(k - m + 1)}\right)\,.
\]
Taking the limit $\varepsilon \to 0$ and recalling that $\det(X_\cS X_\cS^T) = \det(LL^T) \det(Q_\cS Q_\cS^T)$, we recover the standard volume sampling distribution, noting that the selection of a rank-deficient $Q_\cS$ has vanishing probability. Furthermore, the reconstruction error objective $\normf{A - \bP_{A_\cS}A}^2$ transforms into $\varepsilon^2(\normf{Q_\cS^\dag Q_\cR}^2 + n - k) + O(\varepsilon^4) = \varepsilon^2(\normf{X_\cS^\dag X}^2 + n - k - m) + O(\varepsilon^4)$, an affine function of the objective (assuming $Q_\cS$ has full rank; see~\eqref{eqn:C_final}).

\section{Forward iterative volume sampling algorithm}\label{sec:fivs}

In this section, we describe a very simple way to sample column subsets of cardinality $k \geq m$ with probability proportional to their squared volume. All of its ingredients are due to Derezi\'nski, Warmuth et al.~\cite{derezinski2022unbiased, pmlr-v99-derezinski19b}, although they never state the resulting algorithm explicitly, and, to our knowledge, it is absent from the numerical linear algebra literature. The key observation is Theorem~2.4 in~\cite{derezinski2022unbiased}, which states that a volume sample of cardinality $k$ decomposes into a volume sample of cardinality $m$ and $k - m$ columns drawn uniformly at random, the two being independent.

Fortunately, volume sampling for subsets of cardinality $m$ is a well-studied problem with many efficient solutions available~\cite{arp, pmlr-v99-derezinski19b, barthelme2023fastersamplerdiscretedeterminantal}. We present the resulting method in Algorithm~\ref{alg:forward_vol_sample} and establish its properties in Theorem~\ref{thm:forward_vol_sample}.

\begin{algorithm}[htbp]
\caption{Forward iterative volume sampling (FIVS).}\label{alg:forward_vol_sample}
\begin{algorithmic}[1]
\Require $X \in \R^{m \times n}$ ($m \leq n$, $\rank X = m$), sampling parameter $k \in \overline{m, n}$.
\Ensure Subset of column indices $\cS \subset \overline{1,n}$ with $|\cS| = k$.

\State Compute $\matdec{LQ}$ decomposition $X = LQ$. Save matrix $Q$.
\State Run Algorithm~2.1 in~\cite{arp} (or its accelerated version from~\cite{pmlr-v99-derezinski19b} with rejection sampling) on the matrix $Q$ to sample $\cS$ of cardinality $m$.
\State Choose $k - m$ distinct indices from $\cR = \overline{1, n} \setminus \cS$ uniformly at random; denote them by $\cT$. Set $\cS \gets \cS \cup \cT$.
\State \textbf{return} $\cS$
\end{algorithmic}
\end{algorithm}

\begin{theorem}\label{thm:forward_vol_sample}
    There exists a randomized algorithm (Algorithm~\ref{alg:forward_vol_sample}) that, given a full-rank matrix $X \in \R^{m \times n}$ with $m < n$ and a sampling parameter $k \in \overline{m, n}$, constructs a subset $\cS \subseteq \overline{1,n}$ of cardinality $k$. The submatrix $X_\cS$ has full rank, and
    \begin{gather}
    \E\left(\normf{X_\cS^\dag}^2\right) = \frac{n - m + 1}{k - m + 1}\normf{X^\dag}^2\,, \quad \E\left(\norms{X_\cS^\dag}^2\right) \leq \left( 1 + \frac{m(n - k)}{k - m + 1} \right)\norms{X^\dag}^2\,,\label{eqn:X_S_dag_vol_sample}\\
    \E\left(\normf{X_\cS^\dag X}^2\right) = m\frac{n - m + 1}{k - m + 1}\,, \quad \E\left(\norms{X_\cS^\dag X}^2\right) \leq \left( 1 + \frac{m(n - k)}{k - m + 1} \right)\,.\label{eqn:X_S_dag_X_vol_sample}
    \end{gather}
    The asymptotic complexity of the algorithm is $O(nm^2)$ ($O(nm + m^3 \log m)$ in expectation if $Q$ is known and the rejection sampler from~\cite{pmlr-v99-derezinski19b} is used).
\end{theorem}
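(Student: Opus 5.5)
The plan rests on three ingredients: Theorem~2.4 of~\cite{derezinski2022unbiased}, the known expectation formulas for volume sampling~\cite{derezinski2018reverse}, and a short spectral argument.

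\textbf{Correctness of the distribution and full rank.} Volume sampling is invariant under left multiplication by an invertible matrix, since $\det(X_\cS X_\cS^T) = \det(LL^T)\det(Q_\cS Q_\cS^T)$. Hence it suffices to sample with respect to $Q$. Step~2 produces an $m$-subset $\cS_0$ with probability proportional to $\det(Q_{\cS_0})^2$; this is the exactness guarantee of Algorithm~2.1 in~\cite{arp}, or of the rejection sampler in~\cite{pmlr-v99-derezinski19b}. Step~3 then adds a uniformly random $(k-m)$-subset of the complement, independently of $\cS_0$. By Theorem~2.4 in~\cite{derezinski2022unbiased}, the union is distributed as a size-$k$ volume sample.

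I will also check this directly using Cauchy--Binet, as a sanity argument. The probability of a given $\cS$ is a sum over $m$-subsets $\cS_0\subseteq\cS$ of $\det(Q_{\cS_0})^2/\binom{n-m}{k-m}$. Cauchy--Binet turns this sum into $\det(Q_\cS Q_\cS^T)/\binom{n-m}{k-m}$, which is proportional to the squared volume. Full rank is immediate, because $\cS_0$ has positive probability only if $\det Q_{\cS_0}\neq 0$.

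\textbf{Expectations.} Since $\cS$ is an exact volume sample, the Frobenius identities follow from the known formula $\E\normf{X_\cS^\dag}^2 = \frac{n-m+1}{k-m+1}\normf{X^\dag}^2$~\cite{derezinski2018reverse}. Applying this formula to $Q$, for which $\normf{Q^\dag}^2=m$, and using $\normf{Q_\cS^\dag}^2=\normf{X_\cS^\dag X}^2$, gives~\eqref{eqn:X_S_dag_X_vol_sample}.

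For the spectral bounds I will use the inequality displayed in the introduction, $\norms{X_\cS^\dag}^2\le\norms{Q_\cS^\dag}^2\norms{X^\dag}^2$, so it suffices to bound $\E\norms{Q_\cS^\dag}^2$. Every eigenvalue of $Q_\cS Q_\cS^T$ is at most $1$. Therefore each eigenvalue $\mu_i\ge 1$ of $(Q_\cS Q_\cS^T)^{-1}$ satisfies $\mu_i - 1\ge 0$, and
\[
\norms{Q_\cS^\dag}^2 - 1 \le \sum_i(\mu_i - 1) = \normf{Q_\cS^\dag}^2 - m .
\]
Taking expectations,
\[
\E\norms{Q_\cS^\dag}^2 \le 1 + m\frac{n-m+1}{k-m+1} - m = 1 + \frac{m(n-k)}{k-m+1}.
\]
This settles the two spectral bounds in~\eqref{eqn:X_S_dag_vol_sample} and~\eqref{eqn:X_S_dag_X_vol_sample}.

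\textbf{Complexity.} The LQ decomposition costs $O(nm^2)$. Algorithm~2.1 in~\cite{arp} runs $m$ rounds of sampling from squared column norms followed by a rank-one projection update, at $O(nm)$ per round, so $O(nm^2)$ in total. The uniform completion costs $O(n)$. If $Q$ is given, the rejection sampler of~\cite{pmlr-v99-derezinski19b} costs $O(nm + m^3\log m)$ in expectation.

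The main obstacle is not any calculation but pinning down the decomposition step exactly. I need that Theorem~2.4 of~\cite{derezinski2022unbiased} yields exactly the size-$k$ volume distribution, with no dependence on the removed $m$-subset. The Cauchy--Binet computation above is what I would rely on if the citation's hypotheses need checking.
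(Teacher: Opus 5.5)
Your proposal is correct and follows the paper's proof: invariance of volume sampling under $X = LQ$ together with Theorem~2.4 of~\cite{derezinski2022unbiased} for the distribution, the known Frobenius expectation applied to $X$ and to $Q$, and a Frobenius-to-spectral conversion. For $Q$, that conversion is exactly your inequality $\norms{Q_\cS^\dag}^2 \le \normf{Q_\cS^\dag}^2 - m + 1$.

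The differences are minor. Your Cauchy--Binet computation proves the distributional claim directly instead of only citing it. You also get the bound on $\E\norms{X_\cS^\dag}^2$ by transferring the $Q$ bound through $\norms{X_\cS^\dag}\le\norms{Q_\cS^\dag}\norms{X^\dag}$, as in Corollary~\ref{cor:lq_bounds}. The paper instead applies~\eqref{eqn:spec_from_frob_bounds} to $X$ directly with $\rho = \normf{X_\cS^\dag}^2/\normf{X^\dag}^2$, and both routes give the same constant.
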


\begin{proof}
     \textbf{Proof of bounds.} Note that $\vol(X_\cS) = \vol(L)\vol(Q_\cS)$; therefore, performing volume sampling on $X$ is equivalent to performing it on $Q$. Hence, by Theorem~2.4 in~\cite{derezinski2022unbiased}, Algorithm~\ref{alg:forward_vol_sample} constitutes valid volume sampling.

     The Frobenius norm bound for $\normf{X_\cS^\dag}^2$ follows from~\cite[Lemma 3.9]{faster_subset_selection} (see also~\cite[Equation~2.2]{derezinski2018reverse}). The bound for $\normf{X_\cS^\dag X}^2 = \normf{Q_\cS^\dag}^2$ follows by applying the same result to the matrix $Q$, which is valid by the equivalence noted above.

     To obtain the spectral norm bounds, let $\rho = \normf{X_\cS^\dag}^2 / \normf{X^\dag}^2$. Note that it satisfies $\sum_{i = 1}^m \sigma_i^2(X_\cS^\dag) = \rho \sum_{i = 1}^m \sigma_i^2(X^\dag)$. Moving the terms with indices $2, \dots, m$ to the right-hand side and using $\sigma_i(X_\cS^\dag) \geq \sigma_i(X^\dag)$, which also implies $\rho \geq 1$, we obtain
     \begin{equation}\label{eqn:spec_from_frob_bounds}
     \norms{X_\cS^\dag}^2 \leq \rho\,\sigma_1^2\left(X^\dag\right) + \sum_{i = 2}^m (\rho - 1)\,\sigma_i^2\left(X^\dag\right) \leq (m\rho - m + 1)\norms{X^\dag}^2\,.
     \end{equation}
     Taking expectations with $\E\rho = (n - m + 1)/(k - m + 1)$ yields the bound on $\E(\norms{X_\cS^\dag}^2)$. Applying~\eqref{eqn:spec_from_frob_bounds} to the matrix $Q$, for which $\norms{Q^\dag} = 1$ and $\normf{Q^\dag}^2 = m$, yields the bound on $\E(\norms{X_\cS^\dag X}^2)$.

     \textbf{Proof of complexity.} The algorithm consists of the $\matdec{LQ}$ decomposition, which costs $O(nm^2)$; Algorithm~2.1 in~\cite{arp} also runs in $O(nm^2)$; and uniform sampling of $k - m$ columns costs $O(k - m)$. The total complexity is therefore $O(nm^2)$. If $Q$ is precomputed, rejection sampling~\cite{pmlr-v99-derezinski19b} reduces the cost of the $m$-column sampling step to $O(nm + m^3 \log m)$ expected time.
\end{proof}

Note that FIVS compares favorably with RIVS~\cite{derezinski2018reverse}, the other exact volume sampling method for $k \geq m$. Both share the asymptotic complexity $O(nm^2)$ in the general case; however, FIVS attains it deterministically rather than in expectation, and with a smaller constant factor when $k \ll n$, compared to the rejection-sampling variant of RIVS. More importantly, when the orthonormal factor $Q$ is already available, FIVS runs in $O(nm + m^3 \log m)$ expected time via rejection sampling~\cite{pmlr-v99-derezinski19b}, whereas RIVS requires $O(nm^2)$ operations regardless of whether $Q$ is precomputed.

\section{Improving the reverse derandomized volume sampling algorithm}\label{sec:frob_improve}

In this section, we show that the well-known Algorithms~1 and~2 in~\cite{faster_subset_selection} in fact perform the derandomization of volume sampling for the $\normf{X_\cS^\dag}^2$ and $\normf{X_\cS^\dag X}^2$ objectives, respectively. We also propose modifications of these algorithms that improve their running time by removing the hidden logarithmic factor in the target precision.

The first claim follows from the fact (Equation~2.2 in~\cite{derezinski2018reverse}) that under volume sampling
\[
\E \left(\normf{X_\cS^\dag}^2\,|\,\cS \subseteq \overline{1, n}\,,\ \vert\cS\vert = k \right) = \frac{n - m + 1}{k - m + 1}\normf{X^\dag}^2\,.
\]
Applying the above to the submatrix $X_\cT$ for $\cT$ of cardinality $t$ yields
\[
\E\left(\normf{X_\cS^\dag}^2\ \vert\ \cS \subseteq \cT\,,\ \vert \cS \vert = k\right) = \frac{t - m + 1}{k - m + 1} \normf{X_\cT^\dag}^2\,.
\]
Therefore, greedily removing columns that minimize the Frobenius norm of the pseudoinverse (which is the procedure in Algorithm~1 in~\cite{faster_subset_selection}) is the same as minimizing the conditional expectation of the Frobenius norm of the pseudoinverse under volume sampling. Performing greedy removal on the orthonormal factor of $X$ (Algorithm~2 in~\cite{faster_subset_selection}) likewise constitutes the derandomization for the $\normf{Q_\cS^\dag}^2 = \normf{X_\cS^\dag X}^2$ objective.

We now proceed to the second question, namely, making Algorithms~1~and~2 in~\cite{faster_subset_selection} more computationally efficient. We start by employing the Sherman--Morrison formula. Let $X_\cS \in \R^{m \times (n - t)}$ (where $t \in \overline{0, k - 1}$ is the current iteration) be a rank-$m$ column submatrix of $X \in \R^{m \times n}$. By the Sherman--Morrison formula, $X_{\cS - j}$ has full rank (as is required for subset selection) if and only if $x_j^T (X_\cS X_\cS^T)^{-1}x_j < 1$, and
\begin{equation}\label{eqn:frob_pinv_recalc}
\begin{split}
    \normf{X_{\cS - j}^\dag}^2 &= \tr \left(X_{\cS - j}X_{\cS - j}^T\right)^{-1} = \tr \left(X_\cS X_\cS^T - x_jx_j^T\right)^{-1}\\
    &= \tr\left(\left(X_\cS X_\cS^T\right)^{-1} + \frac{\left(X_\cS X_\cS^T\right)^{-1}x_jx_j^T\left(X_\cS X_\cS^T\right)^{-1}}{1 - x_j^T\left(X_\cS X_\cS^T\right)^{-1}x_j}\right)\\
    &\overset{(*)}{=} \normf{X_\cS^\dag}^2 + \frac{x_j^T \left(X_\cS X_\cS^T\right)^{-2}x_j}{1 - x_j^T \left(X_\cS X_\cS^T\right)^{-1}x_j} = \normf{X_\cS^\dag}^2 + \frac{l_j^t}{1 - d_j^t}\,,\\
\end{split}
\end{equation}
where $d_j^t = x_j^T(X_\cS X_\cS^T)^{-1}x_j$, $l_j^t = x_j^T(X_\cS X_\cS^T)^{-2}x_j$, and
$(*)$ holds because of the cyclic property of the trace.

In~\cite{DEHOOG2007349, DEHOOG20111845}, it is proposed to evaluate $d_j^t$ and $l_j^t$ directly for each column, which requires $O(m^3)$ operations per column on each removal iteration. In~\cite{faster_subset_selection}, the authors employ the $\matdec{SVD}$ to bring this down to $O(m^2)$. This strategy, however, requires downdating the $\matdec{SVD}$ upon each column removal.

Our new method for computing $d_j^t$ and $l_j^t$ relies only on standard matrix operations and decompositions. We propose recalculating $l_j^t$ and $d_j^t$ from their values at the previous iteration. The following proposition formalizes this idea.

\begin{proposition}\label{stt:l_d_recalculation}
    Let $X \in \R^{m \times n}$ with $m < n$ be a full-rank matrix, and let $\cS \subseteq \overline{1, n}$ be a subset of cardinality $n - t \in \overline{m + 1, n - 1}$ such that $\rank X_\cS = m$. Then, for indices $j, s \in \overline{1, n}$ ($j \in \cS$, $s \notin \cS$), the scores $d_j^t$, $l_j^t$ can be obtained using
    \[
        d_j^t = d_j^{t - 1} + \frac{\left(a_j^{t - 1}\right)^2}{1 - d_s^{t - 1}}\,,\quad l_j^t = l_j^{t - 1} + \frac{a_j^{t - 1}}{1 - d_s^{t - 1}}\left(2 b_j^{t - 1} + \frac{a_j^{t - 1} b_s^{t - 1}}{1 - d_s^{t - 1}}\right)\,,
    \]
    where $d_s^{t - 1} = x_s^T u$, $l_s^{t - 1} = x_s^T v$, $a_j^{t - 1} = x_j^T u$, $b_j^{t - 1} = x_j^T v$ for vectors $u$ and $v$ defined by $u = (X_{\cS + s} X_{\cS + s}^T)^{-1}x_s$, $v = (X_{\cS + s} X_{\cS + s}^T)^{-2}x_s$.
\end{proposition}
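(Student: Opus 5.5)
The plan is to apply the Sherman--Morrison formula once more, now to the Gram matrix itself rather than to its trace. Set $G = X_{\cS + s}X_{\cS + s}^T$, which corresponds to the previous iteration $t-1$. Then $X_\cS X_\cS^T = G - x_s x_s^T$. Since $\rank X_\cS = m$, this matrix is invertible, and by the equivalence already noted before~\eqref{eqn:frob_pinv_recalc} this gives $d_s^{t-1} = x_s^T G^{-1} x_s < 1$. The denominators $1 - d_s^{t-1}$ are therefore nonzero. Sherman--Morrison then yields
\[
\left(X_\cS X_\cS^T\right)^{-1} = G^{-1} + \frac{u u^T}{1 - d_s^{t-1}}\,, \qquad u = G^{-1}x_s\,,
\]
using the symmetry of $G^{-1}$.

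For $d_j^t$, I will sandwich this identity between $x_j^T$ and $x_j$. The left side is $d_j^t$ by definition. On the right, $x_j^T G^{-1} x_j = d_j^{t-1}$ and $x_j^T u = a_j^{t-1}$, which gives the first formula directly. The only point to check is that $d_j^{t-1}$ is indeed defined relative to $G$, that is, relative to the set $\cS + s$ of the previous iteration.

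For $l_j^t$, I will square the Sherman--Morrison expression. Writing $\gamma = 1/(1 - d_s^{t-1})$,
\[
\left(X_\cS X_\cS^T\right)^{-2} = G^{-2} + \gamma\left(G^{-1}uu^T + uu^TG^{-1}\right) + \gamma^2 u\,(u^Tu)\,u^T\,.
\]
The identities needed are $G^{-1}u = v$ and $u^T u = x_s^T G^{-2} x_s = l_s^{t-1}$. Sandwiching between $x_j^T$ and $x_j$ gives
\[
l_j^{t-1} + 2\gamma\, a_j^{t-1} b_j^{t-1} + \gamma^2 \bigl(a_j^{t-1}\bigr)^2 l_s^{t-1}\,.
\]
Factoring out $\gamma a_j^{t-1}$ yields the bracket $2b_j^{t-1} + \gamma a_j^{t-1} l_s^{t-1}$.

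The main obstacle is reconciling this last term with the statement, which writes $b_s^{t-1}$ where my derivation gives $l_s^{t-1}$. I expect the two to coincide: by definition $b_s^{t-1} = x_s^T v = x_s^T G^{-2} x_s = l_s^{t-1}$. I will point this out explicitly so that the formula matches as stated. Everything else is routine bookkeeping of the symmetric rank-one terms, where the two cross terms are equal scalars because $G^{-1}$ is symmetric.
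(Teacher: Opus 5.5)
Your proposal is correct and follows the paper's proof essentially verbatim. Both apply Sherman--Morrison to $X_{\cS+s}X_{\cS+s}^T - x_s x_s^T$, sandwich the result between $x_j^T$ and $x_j$ to get $d_j^t$, and square it to get $l_j^t$. Your identification $u^T u = x_s^T (X_{\cS+s}X_{\cS+s}^T)^{-2} x_s = b_s^{t-1}$ is the right one; the paper's intermediate line shows exponent $-1$ there, a typo that does not affect its final formula. Your explicit check that $d_s^{t-1} < 1$, so the denominators are nonzero, is a small addition that the paper leaves implicit.
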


\begin{proof}
    The proof proceeds by the Sherman--Morrison formula:
    \begin{align*}
    d_j^t &= x_j^T\left(X_{\cS}X_{\cS}^T\right)^{-1}x_j = x_j^T\left(X_{\cS + s} X_{\cS + s}^T - x_s x_s^T\right)^{-1}x_j\\
    &= x_j^T\left(\left(X_{\cS + s} X_{\cS + s}^T\right)^{-1} + \frac{\left(X_{\cS + s} X_{\cS + s}^T\right)^{-1}x_s x_s^{T}\left(X_{\cS + s} X_{\cS + s}^T\right)^{-1}}{1 - x_s^T \left(X_{\cS + s} X_{\cS + s}^T\right)^{-1} x_s } \right)x_j\\
    &= d_j^{t - 1} + \frac{\left(x_j^T\left(X_{\cS + s} X_{\cS + s}^T\right)^{-1}x_s \right)^2}{1 - x_s ^T \left(X_{\cS + s} X_{\cS + s}^T\right)^{-1} x_s } = d_j^{t - 1} + \frac{\left(a_j^{t - 1}\right)^2}{1 - d_s^{t - 1}}\,.
    \end{align*}

    Analogously,
    \begin{align*}
    l_j^t &= x_j^T\left(\left(X_{\cS + s} X_{\cS + s}^T\right)^{-1} + \frac{\left(X_{\cS + s} X_{\cS + s}^T\right)^{-1}x_s x_s^{T}\left(X_{\cS + s} X_{\cS + s}^T\right)^{-1}}{1 - x_s^T \left(X_{\cS + s} X_{\cS + s}^T\right)^{-1} x_s } \right)^2x_j\\
    &= l_j^{t - 1} + 2\frac{a_j^{t - 1} x_j^T\left(X_{\cS + s} X_{\cS + s}^T\right)^{-2}x_s}{1 - d_s^{t - 1}} + \frac{\left(a_j^{t - 1}\right)^2 x_s^T\left(X_{\cS + s} X_{\cS + s}^T\right)^{-1} x_s}{\left(1 - d_s^{t - 1}\right)^2}\\
    &= l_j^{t - 1} + \frac{a_j^{t - 1}}{1 - d_s^{t - 1}}\left(2 b_j^{t - 1} + \frac{a_j^{t - 1} b_s^{t - 1}}{1 - d_s^{t - 1}}\right)\,.\qedhere
    \end{align*}
\end{proof}

However, directly evaluating $(X_{\cS + s} X_{\cS + s}^T)^{-2}x_s$ can be unstable, so we suggest computing the $\matdec{SVD}$ of $X$, $X = U\Sigma V$ at the beginning of the algorithm, allowing the following transformations:
\begin{equation}\label{eqn:a_b_intermediate}
\begin{split}
    a_j^{t - 1} &= v_j^T \Sigma U^T U \Sigma^{-1} \left(V_{\cS + s} V_{\cS + s}^T\right)^{-1} \Sigma^{-1} U^T U v_s = v_j^T\left(V_{\cS + s} V_{\cS + s}^T\right)^{-1}v_s = v_j^T z_s^{t-1}\,,\\
    b_j^{t - 1} &= v_j^T \Sigma U^TU \Sigma^{-1} \left(V_{\cS + s} V_{\cS + s}^T\right)^{-1} \Sigma^{-2} \left(V_{\cS + s} V_{\cS + s}^T\right)^{-1} \Sigma^{-1} U^TU \Sigma v_s = z_j^T\Sigma^{-2}z_s\,,
\end{split}
\end{equation}
where $Z^{t - 1} = \left(V_{\cS + s}V_{\cS + s}^T\right)^{-1}V$. It can also be updated using the Sherman--Morrison formula upon column removal:
\begin{align}
    Z^t &= \left(V_\cS V_\cS^T\right)^{-1}V = \left(V_{\cS + s} V_{\cS + s}^T - v_s v_s^T\right)^{-1}V \label{eqn:z_recalculation}\\
    &= Z^{t - 1} + \frac{\left(V_{\cS + s} V_{\cS + s}^T\right)^{-1}v_s v_s^T \left(V_{\cS + s} V_{\cS + s}^T\right)^{-1}V}{1 - v_s^T\left(V_{\cS + s} V_{\cS + s}^T\right)^{-1} v_s}
    = Z^{t - 1} + \frac{z_s^{t - 1} \left(z_s^{t - 1}\right)^T Z^{t - 1}}{1 - d_s^{t - 1}}\,. \notag
\end{align}
Note that the additional computation of the $\matdec{SVD}$ changes the complexity of the algorithm to $O(nm^2 + nm(n - k))$ instead of $O(nm(n - k))$. The term $O(nm^2)$, however, is usually negligible compared to $O(nm(n - k))$ when sampling a practical number of columns ($k \ll n$). Moreover, the $\matdec{SVD}$ actually makes these steps a bit cheaper, so we recommend it.

The resulting algorithm is denoted as Algorithm~\ref{alg:frob_rem}, with its properties described in Theorem~\ref{thm:frob_rem}. This theorem is a restatement of Theorem~3.1 in~\cite{faster_subset_selection} with improved spectral norm bounds.

\begin{algorithm}[htbp]
\caption{Frobenius removal.}\label{alg:frob_rem}
\begin{algorithmic}[1]
\Require $X \in \R^{m \times n}$ ($m \leq n$, $\rank X = m$), sampling parameter $k \in \overline{m, n}$.
\Ensure Subset of column indices $\cS \subset \overline{1,n}$ with $|\cS| = k$.

\Phase{Initialization}
\State Set $\cS \gets \overline{1, n}$.
\State Compute $\matdec{SVD}$ of $X$, $X = U \Sigma V$ with $VV^T = I$, assign $Z^0 = V$.
\State \MultL{%
Compute initial values of $d$ and $l$ scores:
\[
d_i^0 \gets v_i^T z_i^0 \quad \text{and} \quad l_i^0 \gets \left(z_i^0\right)^T\Sigma^{-2}z_i^0 \quad \text{for}\ i \in \cS\,,
\]
where $v_j$ and $z_j^t$ denote the $j$-th columns of the matrices $V$ and $Z^t$, respectively.
}
\EndPhase
\For{$t = 0, \dots, n - k - 1$}   
    \Phase{Greedy selection}
    \State Find the index $s \in \cS$ that minimizes\footnote{As suggested by Avron and Boutsidis in~\cite{faster_subset_selection}, direct evaluation of this ratio is susceptible to catastrophic cancellation when $d_s^t \approx 1$. To maintain numerical stability, the comparison between two candidates $i$ and $j$ should be implemented as $l_i^t(1 - d_j^t) \le l_j^t(1 - d_i^t)$. Furthermore, the search should be restricted to indices satisfying $d_s^t < 1 - \tau$ for a small tolerance $\tau$ to avoid rank deficiency.} $l_s^t / (1 - d_s^t)$.
    \EndPhase
    
    \Phase{Update}
    \State Remove $s$ from $\cS$.
    \State Compute $a_j^t \gets v_j^T z_s^t$ and $b_j^t \gets (z_j^t)^T \Sigma^{-2} z_s^t$ for $j \in \cS$.
    \State \MultL{%
    Update $d_j$ and $l_j$ for all $j \in \cS$:
    \[
    d_j^{t + 1} \gets d_j^t + \frac{{a_j^t}^2}{1 - d_s^t}\,,\quad l_j^{t + 1} \gets l_j^t + \frac{a_j^t}{1 - d_s^t}\left(2b_j^t + \frac{a_j^tb_s^t}{1 - d_s^t}\right)\,.
    \]
    }\vspace{-1em}
    \State \MultL{%
    Update $Z$: 
    \[
    Z^{t + 1} \gets Z^t + \frac{z_s^t{z_s^t}^TZ^t}{1 - d_s^t}\,.
    \]
    }\vspace{-1em}
    \EndPhase
\EndFor
\State \textbf{return} $\cS$
\end{algorithmic}
\end{algorithm}

\begin{theorem}\label{thm:frob_rem}
    There exists a deterministic algorithm (Algorithm~\ref{alg:frob_rem}) that, given a full-rank matrix $X \in \R^{m \times n}$ with $m < n$ and a sampling parameter $k \in \overline{m, n}$, constructs a subset $\cS \subseteq \overline{1,n}$ of cardinality $k$. The algorithm ensures that $X_\cS$ has full rank, and
    \[
    \normf{X_\cS^\dag}^2 \leq \frac{n - m + 1}{k - m + 1}\normf{X^\dag}^2\,, \quad \norms{X_\cS^\dag}^2 \leq \left( 1 + \frac{m(n - k)}{k - m + 1} \right)\norms{X^\dag}^2\,.
    \]
    The asymptotic complexity of the algorithm is $O(nm^2 + nm(n - k))$ in the general case, and $O(nm(n - k))$ if the $\matdec{SVD}$ of $X$ is known.
\end{theorem}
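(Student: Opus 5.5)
The proof splits into three parts: correctness of the maintained quantities, the bounds via conditional expectations, and the cost count.

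\textbf{Correctness of the maintained quantities.} I will show by induction on $t$ that at the start of iteration $t$, with $\cS$ the current set of size $n - t$, we have
\[
Z^t = \bigl(V_\cS V_\cS^T\bigr)^{-1}V, \qquad d_j^t = x_j^T\bigl(X_\cS X_\cS^T\bigr)^{-1}x_j, \qquad l_j^t = x_j^T\bigl(X_\cS X_\cS^T\bigr)^{-2}x_j
\]
for all $j \in \cS$. The base case is $Z^0 = V$, since $VV^T = I$. For the scores, $X = U\Sigma V$ gives $(X_\cS X_\cS^T)^{-1} = U\Sigma^{-1}(V_\cS V_\cS^T)^{-1}\Sigma^{-1}U^T$, so that $d_j = v_j^T z_j$ and $l_j = z_j^T \Sigma^{-2} z_j$. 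The inductive step combines~\eqref{eqn:z_recalculation}, the identities~\eqref{eqn:a_b_intermediate}, and Proposition~\ref{stt:l_d_recalculation}.

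\textbf{Full rank and the Frobenius bound.} Choosing only indices with $d_s^t < 1$ keeps $X_{\cS - s}$ of rank $m$ by Sherman--Morrison. Such an index always exists while $|\cS| > m$: since $\sum_{j \in \cS} d_j^t = \tr\bigl(I_m\bigr) = m < |\cS|$, some $d_j^t < 1$. By~\eqref{eqn:frob_pinv_recalc}, minimizing $l_s/(1 - d_s)$ is exactly minimizing $\normf{X_{\cS - s}^\dag}^2$. Let $\phi(\cT) = \frac{|\cT| - m + 1}{k - m + 1}\normf{X_\cT^\dag}^2$ denote the conditional expectation of $\normf{X_{\cS'}^\dag}^2$ under volume sampling of $\cS' \subseteq \cT$ with $|\cS'| = k$. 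Reverse iterative volume sampling (RIVS) from $\cT$ removes $s$ with probability proportional to $\vol^2(X_{\cT - s})$, and the tower property gives
\[
\phi(\cT) = \sum_s \pi_s\, \phi(\cT - s) \geq \min_s \phi(\cT - s).
\]
The greedy choice attains this minimum, because the prefactor $\frac{|\cT| - m}{k - m + 1}$ does not depend on $s$. Hence $\phi$ is non-increasing along the run, which yields $\normf{X_\cS^\dag}^2 \leq \frac{n - m + 1}{k - m + 1}\normf{X^\dag}^2$. Alternatively, I would verify the one-step inequality directly by averaging $\normf{X_{\cT - s}^\dag}^2$ with weights $1 - d_s$ using~\eqref{eqn:frob_pinv_recalc}. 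This gives
\[
\sum_s (1 - d_s) = |\cT| - m, \qquad \sum_s l_s = \normf{X_\cT^\dag}^2,
\]
so the weighted average equals $\frac{|\cT| - m + 1}{|\cT| - m}\normf{X_\cT^\dag}^2$, and the minimum is no larger. Telescoping these factors gives the same bound.

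\textbf{Spectral bound.} Set $\rho = \normf{X_\cS^\dag}^2/\normf{X^\dag}^2$ and apply~\eqref{eqn:spec_from_frob_bounds}, which is deterministic:
\[
\norms{X_\cS^\dag}^2 \leq (m\rho - m + 1)\norms{X^\dag}^2.
\]
Substituting $\rho \leq \frac{n - m + 1}{k - m + 1}$ gives $m\rho - m + 1 \leq 1 + \frac{m(n - k)}{k - m + 1}$.

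\textbf{Complexity.} The SVD costs $O(nm^2)$, and the initial scores cost $O(nm)$. Each of the $n - k$ iterations performs three steps:
\begin{itemize}
\item computing $a$ and $b$ as $V^T z_s$ and $Z^T\Sigma^{-2}z_s$, which is $O(nm)$;
\item the rank-one update of $Z$, namely $z_s (z_s^T Z)$, which is $O(nm)$;
\item the score updates, which are $O(n)$.
\end{itemize}
The total is therefore $O(nm^2 + nm(n - k))$, or $O(nm(n - k))$ when the SVD is given.

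The main obstacle is the rigorous justification of the conditional-expectation monotonicity, i.e.\ the weighted-average identity. The trace computations behind it must be verified carefully, including the requirement $d_s < 1$ for the chosen index.
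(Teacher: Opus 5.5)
Your proposal is correct and follows the paper's proof. Both verify that the maintained $d_j^t$, $l_j^t$, $Z^t$ make the algorithm greedy Frobenius removal, pass from the Frobenius bound to the spectral one through \eqref{eqn:spec_from_frob_bounds}, and count $O(nm)$ work per removal. The only difference is that the paper does not reprove the Frobenius bound: it cites Theorem~3.1 of~\cite{faster_subset_selection} after noting equivalence with its Algorithm~1, whereas you give a self-contained argument. That argument is either the conditional-expectation view sketched at the start of Section~\ref{sec:frob_improve}, or the averaging with weights $1 - d_s$; in the latter, indices with $d_s = 1$ carry zero weight but drop a nonnegative $l_s$, so your ``equals'' should read ``$\leq$'', which still suffices.
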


\begin{proof}
    \textbf{Proof of bounds.} Algorithm~\ref{alg:frob_rem} is equivalent to Algorithm~1 in~\cite{faster_subset_selection}. Both algorithms greedily remove the column whose removal minimizes the Frobenius norm of the pseudoinverse of the remaining submatrix (for our algorithm, this follows by combining~\eqref{eqn:frob_pinv_recalc},~\eqref{eqn:a_b_intermediate}, and~\eqref{eqn:z_recalculation} with Proposition~\ref{stt:l_d_recalculation}). Consequently, the bounds established for the greedy removal strategy in~\cite[Theorem~3.1]{faster_subset_selection} apply directly to the subset $\cS$ constructed here, guaranteeing
    \[
    \normf{X_\cS^\dag}^2 \leq  \frac{n - m + 1}{k - m + 1}\normf{X^\dag}^2\,.
    \]
    
    The spectral norm bound follows by substituting $\rho = \normf{X_\cS^\dag}^2 / \normf{X^\dag}^2 \leq (n - m + 1)/(k - m + 1)$ into~\eqref{eqn:spec_from_frob_bounds}.

    \textbf{Proof of complexity.} The computational cost consists of the initialization phase and the iterative removal loop. In the initialization step, the main cost is the $\matdec{SVD}$, which takes $O(nm^2)$ operations. Computing the initial vectors $d^0$ and $l^0$ requires $O(nm)$ operations.

    The initialization step is followed by $n - k$ iterations. In each iteration $t$:
    
    \begin{enumerate}
        \item Finding the minimizer index $s$ requires scanning the active set $\cS$, taking $O(\vert\cS\vert) = O(n)$ operations.
        \item Computing the coefficients $a_i^t$ and $b_i^t$ for all $i \in \cS$ involves vector-vector operations of size $m$. This results in $O(mn)$ operations.
        \item Updating the scores $d$ and $l$ takes $O(n)$.
        \item Updating the matrix $Z$ costs $O(mn)$.
    \end{enumerate}
    
    Summing the loop costs yields $(n - k) \times O(mn) = O(nm(n - k))$. Combining this with the initialization, the total time complexity is $O(nm^2 + nm(n - k))$. If the $\matdec{SVD}$ is precomputed or $X$ has orthonormal rows, the dominant term is the loop, reducing the complexity to $O(nm(n - k))$.
\end{proof}

However, applied to a general full-rank $X$, the greedy procedure of Algorithm~\ref{alg:frob_rem} cannot guarantee any bounds on $\normxi{X_\cS^\dag X}$, $\xi \in \{2, F\}$, which, as we emphasized in the introduction, may be even more important. To provide the desired guarantees, we need to work with the orthonormal factor of $X$, which can be obtained via the $\matdec{LQ}$ decomposition. The resulting algorithm is denoted as Algorithm~\ref{alg:lq_frob_rem}.

\begin{algorithm}[htbp]
\caption{Reverse derandomized volume sampling (RDVS).}\label{alg:lq_frob_rem}
\begin{algorithmic}[1]
\Require $X \in \R^{m \times n}$ ($m \leq n$, $\rank X = m$), sampling parameter $k \in \overline{m, n}$.
\Ensure Subset of column indices $\cS \subset \overline{1,n}$ with $|\cS| = k$.

\State Compute $\matdec{LQ}$ decomposition $X = LQ$. Save matrix $Q$.
\State Run Algorithm~\ref{alg:frob_rem} with input matrix $Q$ and parameter $k$ to obtain $\cS$.
\State \textbf{return} $\cS$
\end{algorithmic}
\end{algorithm}

The properties of the subset selected by Algorithm~\ref{alg:lq_frob_rem} are summarized in the following corollary.

\begin{corollary}\label{cor:lq_bounds}
    Let $X \in \R^{m \times n}$ be a full-rank matrix, and let $\cS$ be the subset of indices returned by Algorithm~\ref{alg:lq_frob_rem}. Then $X_\cS$ has full rank, and the following bounds hold:
    \[
    \normf{X_\cS^\dag X}^2 \leq m\frac{n - m + 1}{k - m + 1}\,,\quad \norms{X_\cS^\dag X}^2 \leq 1 + \frac{m(n - k)}{k - m + 1}\,.
    \]
    Additionally,
    \[
    \norms{X_\cS^\dag}^2 \leq \left( 1 + \frac{m(n - k)}{k - m + 1} \right) \norms{X^\dag}^2\,.
    \]
    The asymptotic complexity of Algorithm~\ref{alg:lq_frob_rem} is $O(nm^2 + nm(n - k))$ in the general case, and $O(nm(n - k))$ if $X$ has orthonormal rows.
\end{corollary}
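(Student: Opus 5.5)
The plan is to reduce the corollary to Theorem~\ref{thm:frob_rem} applied to the orthonormal factor $Q$ from the $\matdec{LQ}$ decomposition $X = LQ$, with $L \in \R^{m \times m}$ invertible since $\rank X = m$. First, column selection commutes with left multiplication: $X_\cS = L Q_\cS$ for every $\cS$. Hence $X_\cS$ has full rank if and only if $Q_\cS$ does, and Theorem~\ref{thm:frob_rem} already guarantees full rank of $Q_\cS$. Moreover, for full-rank $Q_\cS$ we have $X_\cS^\dag = Q_\cS^\dag L^{-1}$, because $Q_\cS^\dag L^{-1}$ satisfies the four Penrose conditions. In particular $L X_\cS^\dag$ need not simplify, but $X_\cS^\dag L = Q_\cS^\dag$, and this is the form I will use.

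Second, I would compute $X_\cS^\dag X = Q_\cS^\dag L^{-1} L Q = Q_\cS^\dag Q$. Since $Q$ has orthonormal rows, $\normxi{Q_\cS^\dag Q} = \normxi{Q_\cS^\dag}$ for $\xi \in \{2, F\}$. Theorem~\ref{thm:frob_rem} applied to $Q$, with $\normf{Q^\dag}^2 = m$ and $\norms{Q^\dag} = 1$, then gives directly
\[
\normf{X_\cS^\dag X}^2 \leq m\frac{n - m + 1}{k - m + 1}\,,\qquad \norms{X_\cS^\dag X}^2 \leq 1 + \frac{m(n - k)}{k - m + 1}\,.
\]

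Third, for the classical objective I would reuse the chain from the introduction:
\[
\norms{X_\cS^\dag} = \norms{Q_\cS^\dag L^{-1}} \leq \norms{Q_\cS^\dag}\,\norms{L^{-1}}\,.
\]
Since $X^\dag = Q^T L^{-1}$ and $Q^T$ has orthonormal columns, $\norms{L^{-1}} = \norms{X^\dag}$. Squaring and inserting the spectral bound on $\norms{Q_\cS^\dag}^2$ yields the third inequality.

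Finally, the complexity is the cost of the $\matdec{LQ}$ decomposition, $O(nm^2)$, plus Theorem~\ref{thm:frob_rem}'s cost for $Q$. The $\matdec{SVD}$ of $Q$ is trivial, $Q = I \cdot I \cdot Q$, so $\Sigma = I$. This gives $O(nm(n-k))$ when $X$ already has orthonormal rows, since the $\matdec{LQ}$ step can then be skipped, and $O(nm^2 + nm(n-k))$ otherwise. There is no real obstacle; the only point needing care is justifying $X_\cS^\dag = Q_\cS^\dag L^{-1}$. This identity holds because $Q_\cS$ has full row rank, so $Q_\cS^\dag = Q_\cS^T(Q_\cS Q_\cS^T)^{-1}$ and one can check $X_\cS^\dag = X_\cS^T(X_\cS X_\cS^T)^{-1}$ directly.
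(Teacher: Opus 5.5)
Your proposal is correct and follows the paper's proof essentially step for step: you reduce to Theorem~\ref{thm:frob_rem} on $Q$ via $X_\cS = LQ_\cS$, use $\normxi{X_\cS^\dag X} = \normxi{Q_\cS^\dag}$ together with $\normf{Q^\dag}^2 = m$ and $\norms{Q^\dag} = 1$, bound $\norms{Q_\cS^\dag L^{-1}}$ by submultiplicativity, and add the $\matdec{LQ}$ cost to the cost of Algorithm~\ref{alg:frob_rem} run with the trivial $\matdec{SVD}$ of $Q$. You also make explicit three points the paper leaves implicit: the identity $X_\cS^\dag = Q_\cS^\dag L^{-1}$, the equality $\norms{L^{-1}} = \norms{X^\dag}$, and the fact that the $\matdec{LQ}$ step is skipped when $X$ has orthonormal rows.
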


\begin{proof}
    First, recall that $X_\cS = L Q_\cS$. Since $L$ is invertible and Algorithm~\ref{alg:frob_rem} guarantees that $Q_\cS$ has rank $m$, $X_\cS$ must also have rank $m$. 

    Using $\normxi{X_\cS^\dag X}^2 = \normxi{Q_\cS^\dag}^2$, $\xi \in \{2, F\}$, the bounds from Theorem~\ref{thm:frob_rem} can be applied as follows:
    \begin{align*}
        \normf{X_\cS^\dag X}^2 &\leq \frac{n - m + 1}{k - m + 1}\normf{Q^\dag}^2 = \frac{m(n - m + 1)}{k - m + 1}\,,\\
        \norms{X_\cS^\dag X}^2 &\leq \left( 1 + \frac{m(n - k)}{k - m + 1} \right)\norms{Q^\dag}^2 = 1 + \frac{m(n - k)}{k - m + 1}\,.
    \end{align*}
    The bound on the spectral norm follows from
    \[
    \norms{X_\cS^\dag}^2 = \norms{Q_\cS^\dag L^{-1}}^2 \leq \norms{Q_\cS^\dag}^2 \norms{L^{-1}}^2 \leq \left(1 + \frac{m(n - k)}{k - m + 1}\right)\norms{X^\dag}^2\,.
    \]
    
    Algorithm~\ref{alg:lq_frob_rem} consists of the $\matdec{LQ}$ decomposition ($O(nm^2)$) and the execution of Algorithm~\ref{alg:frob_rem} on $Q$. Since the $\matdec{SVD}$ of $Q$ is trivial ($U=I$, $\Sigma=I$, $V=Q^T$), the latter costs $O(nm(n-k))$. Combining the two terms yields the overall complexity $O(nm^2 + nm(n - k))$ for Algorithm~\ref{alg:lq_frob_rem}.
\end{proof}

\section{Frobenius selection algorithm}\label{sec:frob_sel}

As demonstrated in Section~\ref{sec:frob_improve}, $\normf{X_\cS^\dag}^2$ can be cheaply updated upon column removal, which enables an efficient greedy algorithm. It is therefore tempting to apply the same approach to the greedy selection of columns.

Note that recalculations similar to~\eqref{eqn:frob_pinv_recalc} do not hold until $X_\cS$ has full rank, and a slightly different approach is required to select the first $m$ columns~\cite{SubSelQRfro}. A much more significant issue is that a greedy selection strategy is unable to provide tight bounds on the norm of the pseudoinverse of the selected submatrix.

To illustrate this, consider a matrix $X \in \R^{m \times (m + 1)}$ with columns defined as:
\[
x_1 = \frac{\alpha e_1 + (1 - \alpha) u}{\norms{\alpha e_1 + (1 - \alpha) u}}\,,\ \dots\,,\ x_m = \frac{\alpha e_m + (1 - \alpha) u}{\norms{\alpha e_m + (1 - \alpha) u}}\,,\ x_{m + 1} = u\,,
\]
where $e_1,\dots,e_m$ are the standard basis vectors, $u = \sum_{j = 1}^m m^{-1/2} e_j$, and $\alpha \in (0, 1)$ is a small parameter. Geometrically, the columns $x_1,\dots,x_m$ form a narrow cone around the central vector $x_{m + 1}$.

We can assume (or perturb the matrix slightly to ensure) that the greedy selection algorithm picks $x_{m + 1}$ at the first iteration. The symmetry of the remaining columns implies that any subsequent choice is equivalent. Thus, the greedy algorithm effectively selects the subset $\cS = \{2, \dots, m + 1\}$. In contrast, the optimal subset of size $m$, which is also the one chosen by Algorithm~\ref{alg:frob_rem}, is $\cS_{opt} = \{1, \dots, m\}$.

To evaluate the ratio $\normf{X_\cS^\dag}^2 / \normf{X^\dag}^2$, denote the common normalization factor $c = \norms{\alpha e_1 + (1 - \alpha) u}$ and observe that
\[
X = \frac{\alpha}{c}
\begin{pNiceMatrix}[hvlines, margin, cell-space-limits = 3pt]
    \Block{2-1}{I} & \Block{2-1}{0_{m \times 1}} \\
    &
\end{pNiceMatrix}
+ uv^T \quad \text{where} \quad v = 
\begin{pNiceMatrix}[margin, cell-space-limits = 3pt]
    \Block{1-1}{\frac{1 - \alpha}{c}} & \dots &\Block{1-1}{\frac{1 - \alpha}{c}} & 1
\end{pNiceMatrix}^T\,.
\]
Since the first $m$ entries of $v$ are equal, the corresponding part of $uv^T$ is a multiple of $uu^T$, and we obtain
\[
XX^T = \frac{\alpha^2}{c^2}I + \left(1 + \frac{m(1 - \alpha)^2}{c^2} + \frac{2\sqrt{m}\alpha(1 - \alpha)}{c^2}\right)uu^T = fI + guu^T\,.
\]
The expression for $(XX^T)^{-1}$ can be obtained by the Sherman--Morrison formula:
\[
\left(XX^T\right)^{-1} = \frac{1}{f}I - \frac{g}{f^2 + fg}uu^T\,.
\]
Finally, we employ~\eqref{eqn:frob_pinv_recalc} to compute the Frobenius norm of $X_\cS^\dag$. A careful series expansion then reveals
\[
\normf{X^\dag}^2 = \frac{m - 1}{\alpha^2} + O\left(\alpha^{-1}\right)\,, \quad 
\normf{X_\cS^\dag}^2 = \normf{X^\dag}^2 + \frac{m^2 - 1}{\alpha^2} + O\left(\alpha^{-1}\right)\,.
\]

The above means that the ratio $\normf{X_\cS^\dag}^2 / \normf{X^\dag}^2$ approaches $m + 2$ as $\alpha$ approaches~$0$, while for the greedy removal approach it is bounded by $(n - m + 1) / (k - m + 1) = 2$. The greedy selection approach thereby fails catastrophically.

The solution to this problem is to work with the orthonormal factor $Q$ of $X$. In this case, greedily selecting the first $m$ columns is equivalent to running Algorithm~2 in~\cite{OSINSKY2025}, which guarantees
\begin{equation}\label{eqn:CCA_guarantees}
    \normf{Q_\cS^\dag}^2 \leq m(n - m + 1) \quad \text{and} \quad \norms{Q_\cS^\dag}^2 \leq 1 + m(n - m)\,.
\end{equation}

To see that Algorithm~2 in~\cite{OSINSKY2025} indeed constitutes greedy selection in the Frobenius norm, observe that at each step the chosen column $s$ satisfies
\begin{equation}\label{eqn:CCA_update}
s \in \argmin_{j \in \cR}\frac{1 + \norms{Q_\cS^\dag q_j}^2}{\norms{(I - \bP_{Q_\cS})q_j}^2}
\end{equation}
in our notation, where $\cR = \overline{1, n} \setminus \cS$.

At the same time, we can use a generalization of the Sherman--Morrison formula~\cite[(8)]{doi:10.1137/1002004} to obtain
\begin{align*}
Q_{\cS + j}^\dag &= \begin{pNiceMatrix}[hvlines, margin, cell-space-limits = 3pt]
    \Block{1-1}{Q_\cS^\dag - Q_\cS^\dag q_j b^T}\\
    \Block{1-1}{b^T}
\end{pNiceMatrix}
\quad \text{where} \quad b = \frac{(I - \bP_{Q_\cS})q_j}{\norms{(I - \bP_{Q_\cS})q_j}^2}\,.
\end{align*}
Therefore 
\begin{align*}
\normf{Q_{\cS + j}^\dag}^2 &= \normf{Q_\cS^\dag - Q_\cS^\dag q_j b^T}^2 + \norms{b}^2\\
&\overset{(*)}{=} \normf{Q_\cS^\dag}^2 + \norms{Q_\cS^\dag q_j}^2\norms{b}^2 + \norms{b}^2\\
&=  \normf{Q_\cS^\dag}^2 + \frac{1 + \norms{Q_\cS^\dag q_j}^2}{\norms{(I - \bP_{Q_\cS})q_j}^2}\,,
\end{align*}
$(*)$ follows from $\langle Q_\cS^\dag, Q_\cS^\dag q_j b^T \rangle = \langle Q_\cS^\dag b, Q_\cS^\dag q_j \rangle = 0$.

After the first $m$ columns are chosen, the Frobenius norm of the pseudoinverse can be updated using the usual Sherman--Morrison formula. Similarly to~\eqref{eqn:frob_pinv_recalc}, we can derive
\[
    \normf{Q_{\cS + j}^\dag}^2 = \normf{Q_\cS^\dag}^2 - \frac{q_j^T \left(Q_\cS Q_\cS^T\right)^{-2}q_j}{1 + q_j^T \left(Q_\cS Q_\cS^T\right)^{-1}q_j} = \normf{Q_\cS^\dag}^2 - \frac{l_j^t}{1 + d_j^t}\,,\\
\]
where $d_j^t = q_j^T(Q_\cS Q_\cS^T)^{-1}q_j$ and $l_j^t = q_j^T(Q_\cS Q_\cS^T)^{-2}q_j$. These quantities can be updated at each step, as was done in Algorithm~\ref{alg:frob_rem}.

The resulting algorithm is denoted as Algorithm~\ref{alg:frob_sel}, with its properties described in Theorem~\ref{thm:frob_sel}. The bounds stated in the theorem are weaker than those that hold for the greedy removal strategy when $k > m$: they exceed the bound $m(n - m + 1)/(k - m + 1)$ of Table~\ref{tbl:alg_summary} by a factor of roughly $m$ when $m \ll k \ll n$. In exchange, the algorithm can be much faster than both FDVS and RDVS.

\begin{algorithm}[htbp]
\caption{Frobenius selection algorithm.}\label{alg:frob_sel}
\begin{algorithmic}[1]
\Require $X \in \R^{m \times n}$ ($m \leq n$, $\rank X = m$), sampling parameter $k \in \overline{m, n}$.
\Ensure Subset of column indices $\cS \subset \overline{1,n}$ with $|\cS| = k$.
\Phase{Initialization}
    \State Compute $\matdec{LQ}$ decomposition of $X$, save matrix $Q$.
    \State Run Algorithm~2 from~\cite{OSINSKY2025} on $Q$ to obtain the starting set of indices $\cS$.
    \State Set $\cR \gets \overline{1, n} \setminus \cS$, $Z^m \gets (Q_\cS Q_\cS^T)^{-1}Q$.
    \State Compute initial values of $d$ and $l$ scores:
    \[
    d_j^m \gets q_j^T z_j^m \quad \text{and} \quad l_j^m \gets \left(z_j^m\right)^Tz_j^m \quad \text{for}\ j \in \cR\,.
    \]
\EndPhase

\For{$t = m, \dots, k-1$}
    \Phase{Greedy selection}
    \State Find the index $s \in \cR$ that maximizes $l_s^t / (1 + d_s^t)$.
    \EndPhase
    
    \Phase{Update}
    \State Move $s$ from $\cR$ to $\cS$.
    \State Compute $a_j^t \gets q_j^T z_s^t$ and $b_j^t \gets (z_j^t)^T z_s^t$ for $j \in \cR$.
    \State \MultL{%
    Update $d_j$ and $l_j$ for all $j \in \cR$:
    \[
    d_j^{t + 1} \gets d_j^t - \frac{{a_j^t}^2}{1 + d_s^t}\,,\quad l_j^{t + 1} \gets l_j^t - \frac{a_j^t}{1 + d_s^t}\left(2b_j^t - \frac{a_j^tb_s^t}{1 + d_s^t}\right)\,.
    \]
    }\vspace{-1em}
    \State \MultL{%
    Update $Z$: 
    \[
    Z^{t + 1} \gets Z^t - \frac{z_s^t{z_s^t}^TZ^t}{1 + d_s^t}\,.
    \]
    }\vspace{-1em}
    \EndPhase
\EndFor
\State \textbf{return} $\cS$
\end{algorithmic}
\end{algorithm}

\begin{theorem}\label{thm:frob_sel}
    There exists a deterministic algorithm (Algorithm~\ref{alg:frob_sel}) that, given a full-rank matrix $X \in \R^{m \times n}$ with $m < n$ and a sampling parameter $k \in \overline{m, n}$, constructs a subset $\cS \subseteq \overline{1,n}$ of cardinality $k$. The algorithm ensures that $X_\cS$ has full rank, and
    \[
    \normf{X_\cS^\dag X}^2 \leq \frac{m^2}{k}(n - m + 1)\,, \quad \norms{X_\cS^\dag X}^2 \leq \frac{m^2}{k}(n - m + 1) - m + 1\,.
    \]
    The asymptotic complexity of the algorithm is $O(nkm)$. 
\end{theorem}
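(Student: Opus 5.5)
The proof has three parts: the bound after the initial phase, a monotonicity argument for the greedy additions, and a complexity count.

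\textbf{Initial phase.} As in Corollary~\ref{cor:lq_bounds}, I reduce to $Q$: $X_\cS = LQ_\cS$ with $L$ invertible, so $\normxi{X_\cS^\dag X} = \normxi{Q_\cS^\dag}$. After the $m$ columns chosen by Algorithm~2 in~\cite{OSINSKY2025}, $Q_\cS$ is square and invertible, and~\eqref{eqn:CCA_guarantees} gives $\normf{Q_\cS^\dag}^2 \le m(n-m+1)$. Full rank then persists for all later $t$.

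\textbf{Greedy additions.} I will show $\normf{Q_{\cS^{(t)}}^\dag}^2 \le \frac{t}{k}\cdot\ldots$; the cleanest target is
\[
\normf{Q_{\cS^{(k)}}^\dag}^2 \le \frac{m}{k}\normf{Q_{\cS^{(m)}}^\dag}^2 .
\]
The idea is to bound the greedy gain $\max_j l_j^t/(1+d_j^t)$ by averaging over all remaining $j\in\cR$. Write $M = Q_\cS Q_\cS^T$ and $F = \tr M^{-1}$. Since $QQ^T=I$, we have $\sum_{j\in\cR} q_jq_j^T = I - M$, so $\sum_{j\in\cR} l_j = \tr(M^{-2}) - F$ and $d_j \le \tr(M^{-1}(I-M)) = F - m$.

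These sums do not directly give a $(1-1/t)$-type contraction, so I would use a weighted averaging instead. Take weights $w_j = 1+d_j$; then
\[
\max_j \frac{l_j}{1+d_j} \ge \frac{\sum_j l_j}{\sum_j (1+d_j)} = \frac{\tr M^{-2} - F}{|\cR| + F - m}.
\]
With $\tr M^{-2} \ge F^2/m$ this gives
\[
F_{t+1} \le F_t - \frac{F_t^2/m - F_t}{n - t + F_t - m}.
\]
If this recursion does not close to $m/k$, I would switch to the reciprocal potential $1/F$ or to $\tr(M^{-1})-m$. The expected shape is $\frac{m^2(n-m+1)}{k}$ as the final constant. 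I anticipate that the exact factor comes from an inequality of the form $F_t \le \frac{m}{t} F_m$, verified inductively using $F\ge m$ and the recursion above.

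\textbf{Spectral bound and complexity.} The spectral bound follows from the Frobenius one via $\norms{Q_\cS^\dag}^2 \le \normf{Q_\cS^\dag}^2 - (m-1)$. This holds because every singular value of $Q_\cS^\dag$ is at least $1$, since $\sigma_i(Q_\cS)\le 1$.

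For complexity, the $\matdec{LQ}$ decomposition costs $O(nm^2)$ and Algorithm~2 of~\cite{OSINSKY2025} costs $O(nm^2)$. Each of the $k-m$ subsequent iterations costs $O(nm)$: computing $a_j,b_j$ and the rank-one update of $Z$, justified by Sherman--Morrison as in Proposition~\ref{stt:l_d_recalculation}. Since $k\ge m$, the total is $O(nkm)$.

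The main obstacle is the recursion step: proving that greedy addition contracts the Frobenius norm by the exact factor $m/k$ overall. I expect to need careful use of $\sum_{j\in\cR}q_jq_j^T = I-M$ together with a Cauchy--Schwarz bound relating $\tr M^{-2}$ to $F$.
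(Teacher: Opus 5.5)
Your derivation of the recursion is the paper's. The weights $1+d_j$ are the volume-sampling probabilities of the next column, by the matrix determinant lemma. Combined with $\sum_{j\in\cR} q_jq_j^T = I - Q_\cS Q_\cS^T$ and $\tr M^{-2}\ge F^2/m$, they give $F_{t+1}\le h_t(F_t)$, where $h_t(x) = x\bigl(1 - \frac{(x-m)/m}{n-t+x-m}\bigr)$. Your spectral bound and complexity count are also fine.

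The gap is in closing this recursion, which is the core of the proof. The invariant you propose, $F_t \le \frac{m}{t}F_m$, is false, and so is the target $F_k \le \frac{m}{k}F_m$. Since $\sigma_i(Q_\cS)\le 1$, one always has $F_t\ge m$, so your invariant would force $F_m \ge k$. For example, if $Q$ consists of an orthogonal $m\times m$ block plus zero columns, then $F_t = m$ for every $t$. The recursion reflects this: $h_t(m) = m$, so it gives essentially no contraction when $F_t$ is near $m$. Switching to $1/F$ or $F-m$ as a potential cannot rescue a statement that is false.

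What is needed is to propagate the worst-case envelope rather than $F_m$ itself:

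\begin{enumerate}
\item Show that $h_t$ is increasing on $[m,\infty)$, so that an upper bound on $F_t$ can be pushed through the recursion. For instance, substituting $x = m + \alpha(n-t)$ turns $h_t(x)$ into $m + \frac{(n-t-1)\alpha + \frac{m-1}{m}(n-t)\alpha^2}{1+\alpha}$, whose derivative in $\alpha$ is positive.
\item Prove $F_t \le y_t := m^2(n-m+1)/t$ by induction, with base case $t=m$ given by~\eqref{eqn:CCA_guarantees}.
\item In the inductive step, $F_{t+1}\le h_t(y_t) = y_t\bigl(1 - \frac{(y_t-m)/m}{n-t+y_t-m}\bigr)$. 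The bracket is decreasing in its argument, so you may evaluate it at the smaller point $\beta = m\frac{n-m+1}{t-m+1}\le y_t$. There it equals exactly $\frac{t}{t+1}$, which gives $y_{t+1}$.
\end{enumerate}

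The step requires $y_t\ge\beta$, and at $t=m$ this is precisely $y_m\ge m(n-m+1)$. That is why the final constant is $\frac{m}{k}$ times the \emph{bound} $m(n-m+1)$, not $\frac{m}{k}$ times the actual value $F_m$.
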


\begin{proof}
    \textbf{Proof of bounds.} First, note that the update formulas for the coefficients $d_j$, $l_j$ for $j \in \cR$ and the matrix $Z$ indeed guarantee that at iteration $t$ (when $|\cS| = t$),
    \[
    d_j^t = q_j^T\left(Q_\cS Q_\cS^T\right)^{-1}q_j\,,\quad l_j^t = q_j^T\left(Q_\cS Q_\cS^T\right)^{-2}q_j\,,\quad Z^t = \left(Q_\cS Q_\cS^T\right)^{-1}Q\,.
    \]
    We omit the detailed proof here, as it comes down to repeating Proposition~\ref{stt:l_d_recalculation} and~\eqref{eqn:z_recalculation} with column selection instead of column removal, which changes the signs in the Sherman--Morrison formula.

    Now we proceed to the actual proof of bounds. By Theorem~5 in~\cite{OSINSKY2025}, after the initialization phase, that is, at iteration $t = m$, the selected subset satisfies~\eqref{eqn:CCA_guarantees}. At each subsequent iteration,
    \[
    \normf{Q_{\cS + s}^\dag}^2 - \normf{Q_\cS^\dag}^2 = - \frac{l_s^t}{1 + d_s^t} \leq -\E\left(\frac{l_j^t}{1 + d_j^t}\right) \,,
    \]
    where the index $j$ is sampled proportionally to the squared volume of the matrix $Q_{\cS + j}$, $\det(Q_{\cS + j} Q_{\cS + j}^T)$. Using the matrix determinant lemma, it can be transformed as follows:
    \begin{equation}\label{eqn:col_add_expectation}
    \det\left(Q_{\cS + j} Q_{\cS + j}^T\right) = \det\left(Q_\cS Q_\cS^T\right)\left(1 + q_j^T \left(Q_\cS Q_\cS^T\right)^{-1}q_j\right)\,.
    \end{equation}
    
    The normalization factor in this probability distribution thus equals
    \begin{align*}
    C^t &= \sum_{j \in \cR}\det\left(Q_\cS Q_\cS^T\right)\left(1 + q_j^T \left(Q_\cS Q_\cS^T\right)^{-1}q_j\right)\\
    &\overset{(*)}{=} \det\left(Q_\cS Q_\cS^T\right)\left(n - t + \tr\left(\left(Q_\cS Q_\cS^T\right)^{-1}\left(I - Q_\cS Q_\cS^T\right)\right)\right)\\
    &= \det\left(Q_\cS Q_\cS^T\right)\left(n - m - t + \normf{Q_\cS^\dag}^2\right)\,,
    \end{align*}
    $(*)$ holds because of the cyclic property of the trace and the fact that $Q_\cS Q_\cS^T + Q_\cR Q_\cR^T = I$. Therefore,~\eqref{eqn:col_add_expectation} transforms into
    \[
    \normf{Q_{\cS + s}^\dag}^2 - \normf{Q_\cS^\dag}^2 \leq - \sum_{j \in \cR} \frac{q_j^T \left(Q_\cS Q_\cS^T\right)^{-2}q_j}{n - m - t +\normf{Q_\cS^\dag}^2} \overset{(*)}{\leq} -\frac{\normf{Q_\cS^\dag}^4 / m - \normf{Q_\cS^\dag}^2}{n - m - t + \normf{Q_\cS^\dag}^2}\,,
    \]
    $(*)$ holds because of the cyclic property of the trace and the sequence of inequalities $\normf{(Q_\cS Q_\cS^T)^{-1}}^2 = \sum_{i = 1}^m \sigma_i^{-4}(Q_\cS) \geq \norms{Q_\cS^\dag}^2\normf{Q_\cS^\dag}^2 \geq \normf{Q_\cS^\dag}^4 / m$. Denoting $x_t = \normf{Q_\cS^\dag}^2$ at iteration $t$, we obtain
    \begin{equation}\label{eqn:pinv_recursion}
        x_{t + 1} \leq x_t\left(1 - \frac{(x_t - m)/m}{n - t + (x_t - m)}\right)\,.
    \end{equation}

    We begin analyzing this recurrence by noting that the expression on the right-hand side is a monotonically increasing function of $x_t$ for $x_t \geq m$. Substituting $x_t = m + \alpha(n - t)$ with $\alpha \geq 0$ and collecting the $\alpha$-dependent terms over the common denominator $1 + \alpha$, the right-hand side becomes exactly
    \[
    m + \alpha\frac{(n - t - 1) + \frac{m - 1}{m}(n - t)\alpha}{1 + \alpha} = m + \frac{b\alpha + c\alpha^2}{1 + \alpha}\,,
    \]
    where $b = n - t - 1$ and $c = (m - 1)(n - t)/m$. Computing the derivative of that expression we obtain
    \[
    \frac{b + 2c\alpha + c\alpha^2}{(1 + \alpha)^2} \geq \frac{n - t - 1}{(1 + \alpha)^2} > 0 \quad \text{for} \quad t < n - 1\,,
    \]
    since $b \geq 0$ and $c \geq 0$. Hence the right-hand side is strictly increasing in $\alpha$, and therefore in $x_t$. Consequently, we can analyze an upper bound on $x_t$ instead of $x_t$ itself.

    We claim that $x_t \leq y_t = m^2(n - m + 1) / t$. For $t = m$, this holds by~\cite[Theorem~5]{OSINSKY2025}. Now we prove the induction step. Observe that for any fixed $x_t$ outside the brackets, the expression in brackets on the right-hand side of~\eqref{eqn:pinv_recursion} decreases as $x_t \ge m$ grows. We can thus replace the upper bound $y_t$ with a smaller value there:
    \[
    x_{t + 1} \leq y_t\left(1 - \frac{(\beta - m)/m}{n - t + (\beta - m)}\right)\,, \quad \text{where} \quad \beta = m\frac{n - m + 1}{t - m + 1} \leq y_t\,.
    \]
    Transforming this further, we obtain
    \begin{align*}
    x_{t + 1} &\leq \frac{m^2}{t}(n - m + 1)\left(1 - \frac{\frac{n - t}{t - m + 1}}{n - t + m\frac{n - t}{t - m + 1}} \right)\\
    &= \frac{m^2}{t}(n - m + 1) \left(1 - \frac{1}{t + 1}\right) = y_{t + 1}\,.
    \end{align*}
    The Frobenius norm bound now follows from $\normf{X_\cS^\dag X}^2 = \normf{Q_\cS^\dag}^2 = x_k \leq m^2(n - m + 1)/k$.

    The spectral norm bound follows by applying~\eqref{eqn:spec_from_frob_bounds} to the matrix $Q$, for which $\norms{Q^\dag} = 1$ and $\normf{Q^\dag}^2 = m$, with $\rho = \normf{Q_\cS^\dag}^2 / m \leq m(n - m + 1)/k$.

    \textbf{Proof of complexity.} The initialization step requires the $\matdec{LQ}$ decomposition ($O(nm^2)$), running Algorithm~2 in~\cite{OSINSKY2025} ($O(nm^2)$), computing $Z^m$ ($O(nm^2)$), and calculating the initial values of $d$ and $l$, which requires $O(n)$ scalar products of length $m$ and thus $O(nm)$ operations. The asymptotic complexity of each iteration is identical to that of Algorithm~\ref{alg:frob_rem}, namely $O(nm)$ operations. Combining the cost of $k - m$ iterations with the initialization complexity, we obtain $O(nkm)$.
\end{proof}

\begin{remark}\label{rem:fdvs_frob_sel_equivalence}
    For $k = m$, FDVS is equivalent to Frobenius selection. In the notation of Algorithm~\ref{alg:fdvs}, every step must increase the rank, so $\rank Q_{\cS + s} = t + 1$ and FDVS minimizes $|c_1(p_{\cS + s}) / c_0(p_{\cS + s})|$. Since $p_{\cS + s}(x) = (x - 1)^{n - m - t - 1}\tilde{p}_{\cS + s}(x)$, where $\tilde{p}_{\cS + s}(x) = \prod_{i = 1}^{t + 1}(x - \sigma_i^2(Q_{\cS + s}))$ has positive roots,
    \[
        \left\vert\frac{c_1(p_{\cS + s})}{c_0(p_{\cS + s})}\right\vert = \left\vert\frac{c_1(\tilde{p}_{\cS + s})}{c_0(\tilde{p}_{\cS + s})}\right\vert + n - m - t - 1 = \normf{Q_{\cS + s}^\dag}^2 + n - m - t - 1\,.
    \]
    The shift does not depend on $s$, so FDVS greedily minimizes $\normf{Q_{\cS + s}^\dag}^2$, which is the selection rule~\eqref{eqn:CCA_update} of Algorithm~2 in~\cite{OSINSKY2025}.
\end{remark}

\section{Numerical experiments}\label{sec:experiments}

Beyond theoretical guarantees, it is important to test the algorithms in practical scenarios to determine whether better bounds translate into better performance, which is not always the case.

For completeness, we also test several methods that were left out of Table~\ref{tbl:alg_summary} due to their weak Frobenius norm bounds, but that accept arbitrary $k \in \overline{m, n}$, are fast, and are known to perform well in practice. The complete list of tested algorithms is given in Table~\ref{tab:algs_experiments}.

\begin{table}[htbp]
\renewcommand{\arraystretch}{1.15}
\resizebox{\linewidth}{!}{%
\begin{NiceTabular}{|l|l|c|l|}
\hline
Codename & Method & Complexity & Note \\
\hline

\Block{2-1}{FDVS} & \Block{2-1}{Algorithm~\ref{alg:fdvs}}  & \Block{2-1}{$O(nkm)$} & \Block{2-1}{$O(nkm^2)$ complexity in practice}\\
& & & \\
\hline

\Block{2-1}{RDVS} & \Block{2-1}{Algorithm~\ref{alg:lq_frob_rem}}  & \Block{2-1}{$O(nm^2 + nm(n - k))$} & \\
& & & \\
\hline

\Block{2-1}{Frobenius removal} & \Block{2-1}{Algorithm~\ref{alg:frob_rem}}  & \Block{2-1}{$O(nm^2 + nm(n - k))$} & \\
& & & \\
\hline

\Block{2-1}{Frobenius selection} & \Block{2-1}{Algorithm~\ref{alg:frob_sel}}  & \Block{2-1}{$O(nkm)$} & \\
& & & \\
\hline

\Block{2-1}{Dominant} & \Block{2-1}{Dominant~\cite{Osinsky23}}  & \Block{2-1}{$O(nkm\log_c k)$} & \Block{2-1}{$c = 1 + 10^{-6}$, greedy initialization~\cite{osinsky2026subsetselectionmatricescolumn}, \\ $O(nkm)$ complexity in practice}\\
& & & \\
\hline

\Block{2-1}{Dominant-split} & \Block{2-1}{Algorithm~1 in~\cite{osinsky2026subsetselectionmatricescolumn}} & \Block{2-1}{$O(nkm + nm^2 \log\log m$ \\ $+\, nm^2 / \log c)$} & \Block{2-1}{$c = 1 + 10^{-6}$, greedy initialization~\cite{osinsky2026subsetselectionmatricescolumn}, \\ $O(nm^2)$ complexity in practice for small $k$}\\
& & & \\
\hline

\Block{2-1}{Spectral selection} & \Block{2-1}{Algorithm~4 in~\cite{subset_spectral}} & \Block{2-1}{$O(nkm)$\footnote{Stated complexity follows from applying the method from Item~\ref{itm:remaining_col_rotation} of the proof of Theorem~\ref{thm:vol_derand} to the analogous step in Algorithm~4 in~\cite{subset_spectral}}} & \Block{2-1}{$O(nkm^2)$ complexity in practice}\\
& & & \\
\hline

\Block{2-1}{GappyPOD+E} & \Block{2-1}{Algorithm~1 in~\cite{doi:10.1137/19M1307391}} & \Block{2-1}{$O(nkm)$\footnote{Stated complexity follows from replacing full $\matdec{SVD}$ of the selected submatrix with a rank-one update~\cite{doi:10.1137/S089547989223924X} on each iteration.}} & \Block{2-1}{$O(nkm + km^3)$ complexity\\ in our implementation} \\
& & & \\
\hline

\Block{2-1}{VS} & \Block{2-1}{Algorithm~\ref{alg:forward_vol_sample}} & \Block{2-1}{$O(nm^2)$} & \Block{2-1}{Implemented as FIVS}\\
& & & \\
\hline

\Block{2-1}{Leverage scores} & \Block{2-1}{Leverage scores~\cite{1109557.1109682}} & \Block{2-1}{$O(nm^2)$} & \\
& & & \\
\hline

\end{NiceTabular}%
}
\caption{Summary of approximation algorithms for Problem~\ref{prm:subset_selection} compared in our numerical experiments. All algorithms accept arbitrary $k \in \overline{m, n}$.}
\label{tab:algs_experiments}
\end{table}

The algorithms are tested on matrices arising in optimal experimental design, sensor placement, and DLRA-DEIM. The main quantity measured in our experiments is $\normf{X_\cS^\dag X} / \sqrt{m}$, but we additionally record $\normf{X_\cS^\dag} / \normf{X^\dag}$, $\norms{X_\cS^\dag X}$, $\norms{X_\cS^\dag} / \norms{X^\dag}$, $(\vol(X) / \vol(X_\cS))^{1/m}$, and the target metric of each application.

Some algorithms from Table~\ref{tab:algs_experiments} coincide or admit natural variants depending on whether $X$ has orthonormal rows and whether $k = m$. We handle these cases in the figures as follows.
\begin{enumerate}
    \item\label{itm:comma} For matrices with orthonormal rows, Frobenius removal is equivalent to RDVS, so we plot a single line or marker for both. In the legend, such coinciding algorithms are separated by a comma.
    \item\label{itm:comma_2} For $k = m$, FDVS and Frobenius selection are equivalent and are likewise separated by a comma.
    \item\label{itm:comma_3} For $k = m$, GappyPOD+E~\cite{doi:10.1137/19M1307391} reduces to QDEIM~\cite{deim}, since its greedy procedure only starts after the first $m$ columns are selected; we again use a comma.
    \item\label{itm:vertical_bar} For matrices with nonorthonormal rows, RDVS performed strictly better than Frobenius removal in terms of $\normxi{X_\cS^\dag X}$, $\xi \in \{F, 2\}$, in our experiments, while Frobenius removal was better in terms of $\normxi{X_\cS^\dag} / \normxi{X^\dag}$. We therefore display RDVS on the subplots for the first group of objectives and Frobenius removal on those for the second group, and separate the two by \enquote{|} in the legend.
    \item\label{itm:vertical_bar_2} As formulated, Frobenius selection and GappyPOD+E operate on an orthonormal basis of the row space of $X$. However, their greedy steps, which start after the first $m$ columns are chosen, can be applied to $X$ directly. In our experiments, this \enquote{no LQ} variant yields strictly better results in terms of $\normxi{X_\cS^\dag} / \normxi{X^\dag}$, $\xi \in \{F, 2\}$, so we display it on the corresponding subplots, again separated by \enquote{|} in the legend.
\end{enumerate}

The complete code used to conduct the experiments is available at \url{https://github.com/KozyrevIN/subset-selection-for-matrices}. The algorithms are implemented in C++ using the Eigen library, and the figures are produced in Python with Matplotlib.

\subsection{Running time comparison}\label{sec:runtime}

We timed the algorithms of Table~\ref{tab:algs_experiments} on random Gaussian $m \times n$ matrices over the grid $m \in \{32, 64, 128\}$, $n \in \{2^{11}, \dots, 2^{14}\}$, $k \in \{m, 2m, 3m, 4m\}$. All runs were single-threaded on an AMD Ryzen 7 8845H CPU warmed up to its boost frequency with 32\,GB of DDR5 RAM. Figure~\ref{fig:runtime} and Table~\ref{tab:runtime_constants} show the running times fitted as $t = C f$, where $f$ is the leading complexity term. No grid point deviates from its fit by more than a factor of $4.01$.

The cost models follow the practical complexities of Table~\ref{tab:algs_experiments}. For Dominant, we drop the $\log_c k$ factor, and for Dominant-split, we keep only the $nm^2$ term, which fits best: with greedy initialization, the number of swaps for both algorithms is small and roughly constant~\cite{osinsky2026subsetselectionmatricescolumn}. For GappyPOD+E, we keep only the $nkm$ term, which leaves the largest deviation in the table. RDVS and Frobenius removal have indistinguishable timings, so only the RDVS fit is listed. LQ is the thin $\matdec{LQ}$ decomposition alone, computed with \texttt{Eigen::HouseholderQR} applied to $X^T$; it is a natural baseline, since most algorithms start with it.

\begin{figure}[htbp]
\centering
\includegraphics[width=\textwidth]{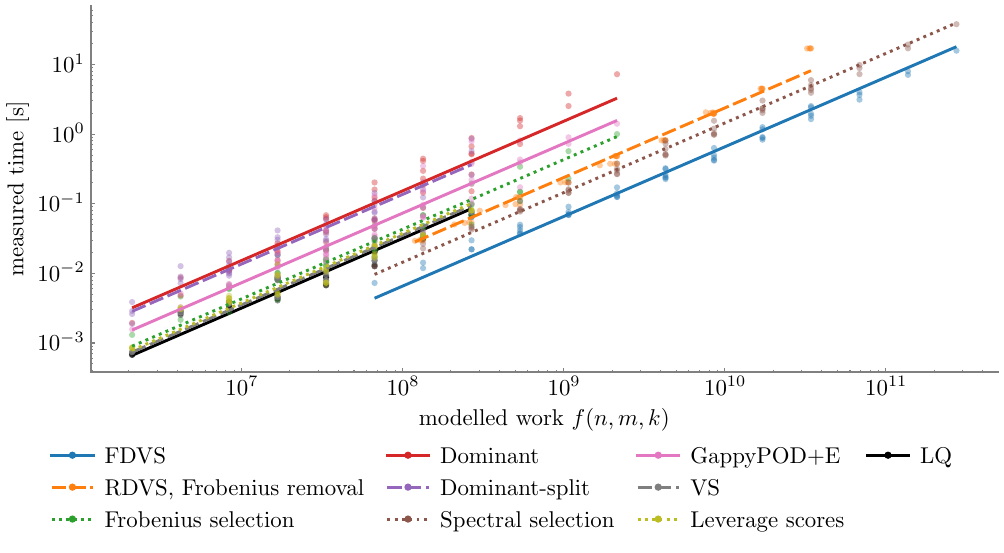}
\caption{Measured running time against the cost model $f$ of Table~\ref{tab:runtime_constants}. Markers are grid points; lines are the fits $t = C f$.}
\label{fig:runtime}
\end{figure}

\begin{table}[htbp]
\centering
\resizebox{\linewidth}{!}{%
\begin{tabular}{|l|c|c|c|}
\hline
Codename & Cost model $f$ & $C$ [$10^{-11}$\,s] & Maximum deviation \\
\hline
FDVS & $nkm^2$ & $6.55 \pm 0.43$ & $1.70\times$ \\
\hline
RDVS, Frobenius removal & $nm(n-k)$ & $24.2 \pm 1.8$ & $2.18\times$ \\
\hline
Frobenius selection & $nkm$ & $42.9 \pm 5.6$ & $2.59\times$ \\
\hline
Dominant & $nkm$ & $153 \pm 23$ & $2.71\times$ \\
\hline
Dominant-split & $nm^2$ & $136 \pm 16$ & $2.35\times$ \\
\hline
Spectral selection & $nkm^2$ & $14.4 \pm 0.7$ & $1.41\times$ \\
\hline
GappyPOD{+}E & $nkm$ & $73.1 \pm 11.0$ & $4.01\times$ \\
\hline
VS & $nm^2$ & $35.2 \pm 3.4$ & $1.87\times$ \\
\hline
Leverage scores & $nm^2$ & $37.5 \pm 4.1$ & $2.14\times$ \\
\hline
LQ & $nm^2$ & $31.7 \pm 3.4$ & $2.19\times$ \\
\hline
\end{tabular}%
}
\caption{Runtime constants fitted as $t = C f$. $C$ is the geometric mean of $t / f$ over the $48$ grid points of Section~\ref{sec:runtime}, with a 95\% Student-$t$ interval reflecting its scatter across the grid. Maximum deviation is the largest factor by which a grid point deviates from $C f$.}
\label{tab:runtime_constants}
\end{table}

By the fitted constants, Frobenius selection costs about $1.35\,k/m$ LQ decompositions. For $k$ close to $m$, it thus costs little more than the LQ decomposition itself and about as much as VS and Leverage scores ($1.11$ and $1.18$ LQ decompositions). It is the fastest deterministic algorithm for $k < 3.2m$; beyond that, Dominant-split, whose cost grows very slowly with $k$, is cheaper. FDVS is about twice as fast as Spectral selection, which shares its cost model.

\subsection{Optimal experimental design}

In experimental design, one estimates parameters $\beta \in \R^m$ from a linear model $y = X^T \beta + \varepsilon$, $\varepsilon \sim \mathcal{N}(0, \sigma^2I)$, where the columns of $X \in \R^{m \times n}$ are candidate experiments and a budget allows only $k$ of them to be run. Choosing a subset $\cS$ fixes the covariance of the least-squares estimator $\hat\beta = (X_\cS^\dag)^T y_\cS$ as $\sigma^2 (X_\cS X_\cS^T)^{-1}$. Classical optimality criteria are built from this covariance~\cite{Huan_Jagalur_Marzouk_2024, doi:10.1287/moor.2021.1129}. A-optimal design minimizes its trace, i.e., the total variance of $\hat\beta$, which is exactly $\sigma^2 \normf{X_\cS^\dag}^2$. V-optimal design instead minimizes the average variance of the predicted response $\hat y = X^T \hat\beta$ over the candidate points themselves, $\sum_{j=1}^n \mathrm{Var}(\hat y_j) / n = \sigma^2 \normf{X_\cS^\dag X}^2 / n$. Finally, E-optimal design minimizes the largest eigenvalue of the covariance, i.e., the worst-case variance of $\hat\beta$ over all unit directions, which equals $\sigma^2 \norms{X_\cS^\dag}^2$.

We use the superconductivity dataset~\cite{HAMIDIEH2018346} for predicting the critical temperature of superconductors, preprocessed by standardizing the rows and adding an intercept row. The resulting matrix $X$ has shape $82 \times 21236$ and condition number $797.4$; all its rows have Euclidean norm $\sqrt{n}$, and all rows except the intercept row have zero mean. The number of experiments $k$ is swept from $m$ to $8m$. We report the design criteria rather than the prediction error, which is dominated by the large residual of the linear model. Figures~\ref{fig:superconductivity_frob} and~\ref{fig:superconductivity_spec} display results in terms of Frobenius norm objectives and spectral norm objectives, respectively. We use a square-root scale for the $k$-axis, which expands the region $k \approx m$ and makes differences between algorithms visible there.

\begin{figure}[htbp]
\centering
\includegraphics[width=\textwidth]{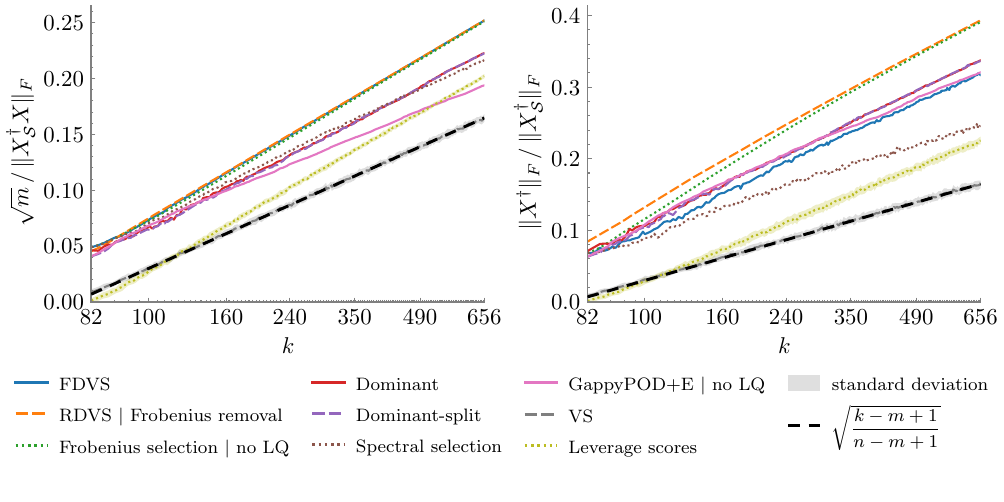}
\caption{Frobenius-norm-based objectives on the superconductivity dataset, $m = 82, n = 21236$: $\sqrt{m}/\normf{X_\cS^\dag X}$ (V-optimality), $\normf{X^\dag}/\normf{X_\cS^\dag}$ (A-optimality), larger is better for both objectives. Notation with \enquote{|} is explained in Items~\ref{itm:vertical_bar} and~\ref{itm:vertical_bar_2}.}
\label{fig:superconductivity_frob}
\end{figure}

In terms of $\normf{X_\cS^\dag X} / \sqrt{m}$, all deterministic algorithms perform comparably, with the three algorithms that explicitly target the Frobenius norm (FDVS, RDVS, Frobenius selection) leading within narrow margins of each other. In terms of $\normf{X_\cS^\dag} / \normf{X^\dag}$, the spread is larger, and the algorithms that target the Frobenius norm while accounting for the nonorthonormality of $X$ (Frobenius removal, Frobenius selection no LQ) score significantly better. Randomized algorithms are two (for large $k$) to ten or more (for $k$ close to $m$) times worse in both metrics. Unless the $\matdec{LQ}$ decomposition of $X$ is precomputed, this renders them impractical: the fastest deterministic methods cost little more than the $\matdec{LQ}$ decomposition itself, and hence little more than the randomized methods.

\begin{figure}[htbp]
\centering
\includegraphics[width=\textwidth]{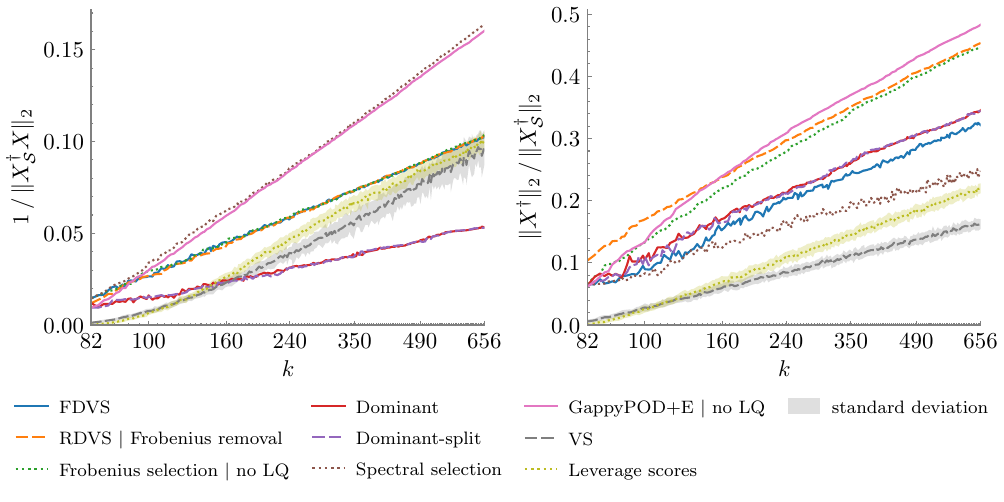}
\caption{Spectral-norm-based objectives on the superconductivity dataset, $m = 82, n = 21236$: $1/\norms{X_\cS^\dag X}$, $\norms{X^\dag}/\norms{X_\cS^\dag}$ (E-optimality), larger is better for both objectives. Notation with \enquote{|} is explained in Items~\ref{itm:vertical_bar} and~\ref{itm:vertical_bar_2}.}
\label{fig:superconductivity_spec}
\end{figure}

Note that while $\normf{X_\cS^\dag X}$ may be similar for different algorithms, the selected submatrices are not, as Figure~\ref{fig:superconductivity_spec} demonstrates. In terms of $\norms{X_\cS^\dag X}$, deterministic algorithms form three distinct groups according to the objective of their greedy procedure: the spectral norm (Spectral selection, GappyPOD+E), the Frobenius norm (FDVS, RDVS, Frobenius selection), and the volume (Dominant, Dominant-split). The first group outperforms the last by a factor of up to almost three, and for large $k$ the volume-based algorithms perform even worse than the randomized ones. In terms of $\norms{X_\cS^\dag} / \norms{X^\dag}$, this pattern is disturbed by the nonorthonormality of $X$, which pushes Frobenius removal and GappyPOD+E no LQ to the top. Randomized algorithms again lag far behind.

\subsection{Sensor placement}

In sensor placement, one reconstructs a high-dimensional state $f \in \R^n$ from measurements taken at only $k$ of its $n$ entries~\cite{sensor_placement}. The state is assumed to be well captured by a low-dimensional basis $U \in \R^{n \times m}$, $U^T U = I$, $m \ll n$, typically the $m$ leading left singular vectors of a matrix of training snapshots. Writing $f = U a + e$ with coefficients $a \in \R^m$ and a truncation residual $e$, the sensors at positions $\cS$ read $f_\cS = U_\cS a + e_\cS + \eta$, where $\eta \sim \mathcal{N}(0, \sigma^2 I)$ is the measurement noise, and the state can be recovered as $\hat f = U U_\cS^\dag f_\cS$. Its error $\hat f - f = U U_\cS^\dag (e_\cS + \eta) - e$ depends on the placement through the amplification factor $\normxi{U U_\cS^\dag} = \normxi{U_\cS^\dag}$, in the mean square for $\xi = F$ and in the worst case for $\xi = 2$. With $X = U^T$, this is precisely Problem~\ref{prm:subset_selection}, and since $X$ has orthonormal rows, $\normxi{X_\cS^\dag X} = \normxi{X_\cS^\dag}$, so both groups of our objectives coincide here.

We reproduce the benchmark of~\cite{sensor_placement} on the NOAA OI SST V2 weekly mean sea surface temperature dataset~\cite{AnImprovedInSituandSatelliteSSTAnalysisforClimate}, where masking out land leaves $n = 44219$ ocean points per weekly field. We take $U$ to be the $m = 302$ leading left singular vectors of the first $1400$ weeks, centered by their temporal mean, and hold out the following $200$ weeks. The sensor count $k$ is swept from $m$ to $8m$.

Each held-out week is reconstructed from the selected sensors, and we report the relative reconstruction error in the Euclidean norm, averaged over the held-out weeks. We plot it against the submatrix metrics to show how well each of them predicts the error. The error of a fixed sensor set varies across weeks more than it differs between algorithms, so the plain standard error would hide consistent differences. We therefore compute the standard error after subtracting, for each week, the mean error across all algorithms; it does not exceed $0.008$ and fits inside the markers. Randomized algorithms are omitted from this figure so that the deterministic ones remain distinguishable.

\begin{figure}[htbp]
\centering
\includegraphics[width=\textwidth]{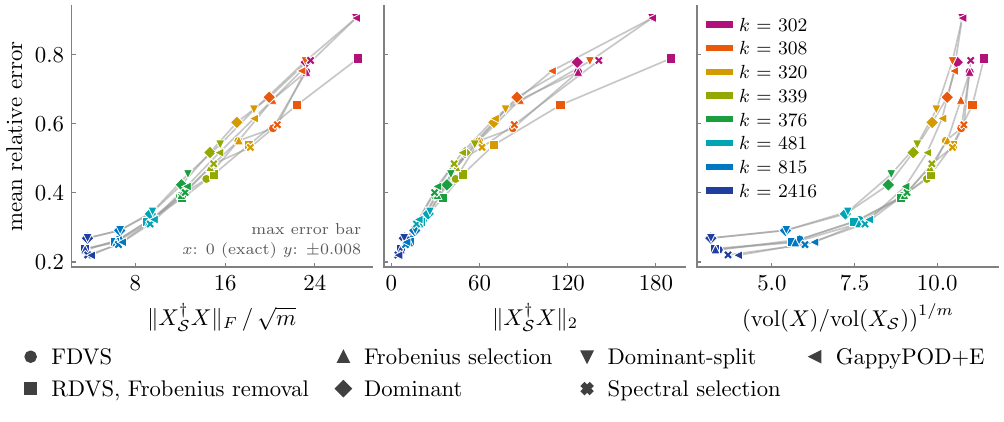}
\caption{Sensor placement on the NOAA SST dataset, $m = 302, n = 44219$: relative reconstruction error against three subset selection metrics. Colors denote the number of sensors $k$, markers the algorithms (Item~\ref{itm:comma} explains the notation with \enquote{,}).}
\label{fig:sensor_placement}
\end{figure}

The results in Figure~\ref{fig:sensor_placement} reinforce the observations from the previous subsection: for each of the three submatrix quality metrics, the algorithms that directly optimize it are the top performers. Notable exceptions are RDVS, which performs poorly in terms of $\normf{X_\cS^\dag X}$ for $k$ close to $m$, and Dominant, which is among the top performers for small $k$ regardless of the metric. While the mean relative error is largely governed by $\normf{X_\cS^\dag X}$, both in theory and in practice, some algorithms consistently yield a lower (RDVS, Frobenius removal) or higher (Dominant, Dominant-split) error than other algorithms that select submatrices with the same $\normf{X_\cS^\dag X}$. Therefore, achieving the best possible performance requires evaluating the algorithms on the target metric of the application itself.

\subsection{DLRA-DEIM}

DLRA-DEIM~\cite{carrel2025} combines dynamical low-rank approximation (DLRA)~\cite{doi:10.1137/050639703}, a framework for approximately solving matrix and tensor differential equations on a low-rank manifold, with the discrete empirical interpolation method (DEIM)~\cite{doi:10.1137/090766498}, which speeds up the computation of projections onto the tangent space of that manifold.

It is known that for many nonlinear matrix or tensor ODEs $\dot{Y}(t) = F(Y(t))$, the solution $Y(t)$ is well captured by a low-rank manifold $\mathcal{M}$ (e.g., the manifold of rank-$m$ matrices, tensors with Tucker rank $m$, or tensor trains with rank $m$) at every time $t$. DLRA aims to reduce the computational cost of solving such ODEs by projecting them onto the manifold: $\dot{Y}(t) = \bP_{T_{Y(t)}\mathcal{M}} F(Y(t))$, where $\bP_{T_{Y(t)}\mathcal{M}}$ is the orthogonal projector onto the tangent space of $\mathcal{M}$ at a point $Y(t)$. Robust numerical integrators for this equation, such as the projector-splitting~\cite{lubich2014projector} and basis-update \& Galerkin~\cite{ceruti2022unconventional} integrators, access $F$ only through its projections onto the tangent space or onto the spaces spanned by the current low-rank factors. When $F$ maps low-rank arguments to low-rank values, these projections are cheap to compute, and DLRA offers a significant speedup~\cite{doi:10.1137/050639703}.

For some equations, however, $Y(t)$ is well captured by a low-rank manifold while $F(Y(t))$ is not. An example is the space-discretized Allen--Cahn equation, where $F$ involves elementwise powers. Computing the exact orthogonal projections is then expensive. DLRA-DEIM addresses this issue with DEIM-style oblique projectors, which require evaluating only a fraction of the entries of the right-hand side. Constructing such projectors in the matrix~\cite{carrel2025}, Tucker tensor~\cite{carrel2025}, and tensor train~\cite{DEKTOR2025143} cases involves selecting pivot indices from matrices of singular vectors, factor matrices $U^{(i)}$, or submatrices of interface matrices, respectively. In each case, the error of the oblique projector relative to the orthogonal one is governed by $\normxi{X_\cS^\dag}$, $\xi \in \{F, 2\}$, where $X$ is the matrix the indices are selected from. In the matrix and Tucker cases, $X$ has orthonormal rows, so $\normxi{X_\cS^\dag}$ and $\normxi{X_\cS^\dag X}$ coincide and the distinction between the two groups of objectives from Items~\ref{itm:vertical_bar} and~\ref{itm:vertical_bar_2} does not arise. In the tensor train case, $X$ does not have orthonormal rows, but it is nearly orthonormal: in our experiments, its condition number never exceeds $1.22$. The two groups of objectives therefore nearly coincide, and the variants that operate on $X$ directly (Frobenius removal and the \enquote{no LQ} variants) produce results almost identical to their orthogonalizing counterparts, so we omit them from the figures.

We solve the three-dimensional Allen--Cahn equation
\[
\frac{\partial y}{\partial t} = \kappa \Delta y + y - y^3
\]
on $\Omega = [0, 2\pi]^3$ with periodic boundary conditions, $\kappa = 0.1$, and the initial condition from~\cite{DEKTOR2025143}. The Laplacian is discretized by second-order central finite differences on a uniform grid with $N = 256$ points per dimension and step $h = 2\pi / N$, which yields a tensor ODE $\dot{Y}(t) = F(Y(t))$ with $Y(t) \in \R^{N \times N \times N}$. We integrate it on $t \in [0, 10]$ in the tensor train format with all ranks equal to $m = 16$, using the tensor train collocation method of~\cite{DEKTOR2025143} and the explicit midpoint (second-order Runge--Kutta) method with time step $\Delta t = h^2 / (12\kappa)$. Snapshots of the solution are shown in Figure~\ref{fig:allen_cahn_cutaway}. In all DLRA-DEIM experiments, we select exactly $k = m$ indices: oversampling ($k > m$) requires additional design choices in how the resulting projector is combined with the rank-$m$ integrator, which are beyond the scope of this paper.

\begin{figure}[htbp]
\centering
\includegraphics[width=\textwidth]{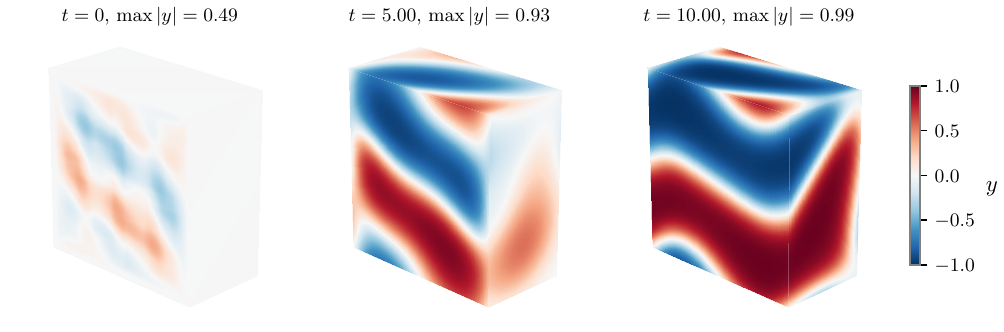}
\caption{Cutaway view of the solution $y$ of the Allen--Cahn equation at times $t = 0$, $t = 5$, $t = 10$.}
\label{fig:allen_cahn_cutaway}
\end{figure}

\begin{figure}[htbp]
\centering
\includegraphics[width=\textwidth]{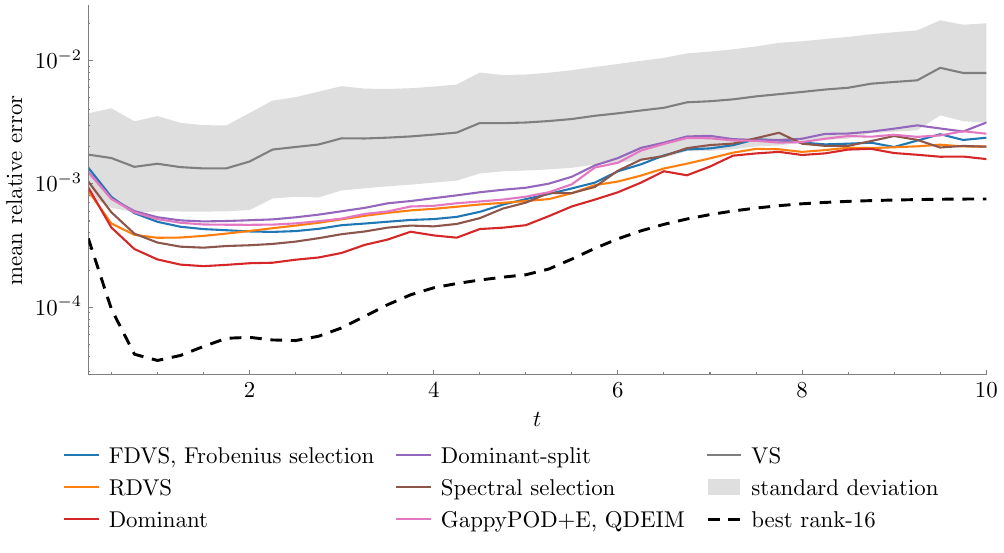}
\caption{Relative error in the Frobenius norm over time in solving the Allen--Cahn equation with DLRA-DEIM, $k = m = 16$. The shaded area shows one standard deviation for VS. Notation with \enquote{,} is explained in Items~\ref{itm:comma_2} and~\ref{itm:comma_3}.}
\label{fig:allen_cahn_trajectories}
\end{figure}

Figure~\ref{fig:allen_cahn_trajectories} shows the relative error $\normf{y - y_{\rm ref}} / \normf{y_{\rm ref}}$ over time for each selection algorithm, where $y_{\rm ref}$ is the reference solution computed by a dense solve on the full grid, together with the error of the best rank-$16$ approximation of $y_{\rm ref}$. All deterministic algorithms outperform VS by a factor of two to five, while the integration with leverage score sampling diverges and is therefore not shown; this again renders randomized selection impractical. Among the deterministic algorithms, Dominant yields the smallest error, roughly half that of most other methods up to $t \approx 6$, after which the errors of all deterministic algorithms become comparable.

\begin{figure}[htbp]
\centering
\includegraphics[width=\textwidth]{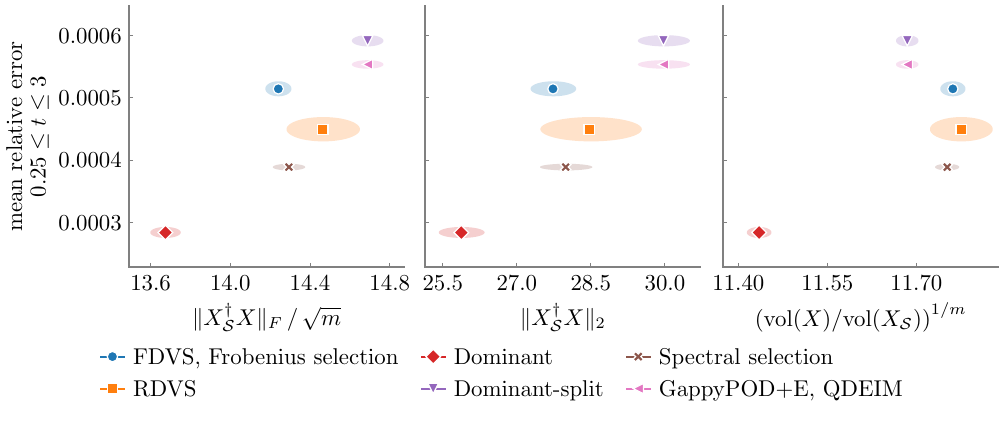}
\caption{Mean relative error in the Frobenius norm on $0.25 \leq t \leq 3$ against three subset selection metrics in solving the Allen--Cahn equation with DLRA-DEIM, $k = m = 16$. Ellipses show the standard deviation over time steps after subtracting the mean across all algorithms at each step. Notation with \enquote{,} is explained in Items~\ref{itm:comma_2} and~\ref{itm:comma_3}.}
\label{fig:allen_cahn_integration_vs_metrics}
\end{figure}

Comparing subset selection algorithms is most meaningful when the right-hand side is well captured by the tangent space, so that the interpolation error is governed by $\normxi{X_\cS^\dag}$. To identify this regime, we measure the relative error $\normf{\Delta F} / \normf{F}$ of the right-hand side cross-interpolated from the selected indices:
\begin{enumerate}
    \item For $0 \leq t \leq 0.25$, this error is large because the state itself is poorly represented in the low-rank format: at $t = 0.125$, it exceeds $100\%$ for some selectors.
    \item For $0.25 \leq t \leq 3$, its geometric mean over all selectors is $1.7\%$.
    \item For $t > 3$, the geometric mean grows to $94\%$, and $48\%$ of the measured values exceed $100\%$. This deterioration occurs because the solution approaches a steady state, where the right-hand side is dominated by noise.
\end{enumerate}
We therefore restrict the comparison to $0.25 \leq t \leq 3$. Figure~\ref{fig:allen_cahn_integration_vs_metrics} relates the mean relative error on this interval to the submatrix quality metrics.

The error generally grows with both $\normf{X_\cS^\dag X}$ and $\norms{X_\cS^\dag X}$ but shows no clear dependence on the volume, in agreement with the fact that the error of the oblique projector is governed by $\normxi{X_\cS^\dag}$, $\xi \in \{F, 2\}$, which nearly coincides with $\normxi{X_\cS^\dag X}$ here. In line with the sensor placement experiment, Dominant attains the best values of both $\normf{X_\cS^\dag X}$ and $\norms{X_\cS^\dag X}$ for $k = m$ and, consequently, the smallest error. Note that the spread in $\normf{X_\cS^\dag X}$ between the algorithms is below $10\%$, yet it translates into a twofold difference in the error.

\section{Conclusion}\label{sec:conclusion}

In this paper, we studied volume sampling and its derandomization as tools for Problem~\ref{prm:subset_selection}. Our main theoretical result is FDVS (Algorithm~\ref{alg:fdvs}), which deterministically reaches the bound $\normf{X_\cS^\dag X}^2 \leq m (n - m + 1)/(k - m + 1)$ in $O(nkm)$ operations. This bound is the tightest known for the problem, and all previous algorithms that provably attain it in polynomial time are at least quadratic in $n$ when $k \ll n$; FDVS is the first to do so with complexity linear in $n$. Together with FIVS (Algorithm~\ref{alg:forward_vol_sample}), RIVS~\cite{derezinski2018reverse}, and RDVS (Algorithm~\ref{alg:lq_frob_rem}), it completes a unified picture in which volume sampling is performed either by forward selection or by reverse removal of columns, in either randomized or derandomized form. 

Within this picture, we identified Algorithm~2 in~\cite{faster_subset_selection} as the reverse derandomized variant and showed how to implement it without $\matdec{SVD}$ downdating, which requires the fast multipole method~\cite{doi:10.1137/S0895479893251472} to run in $O(nm)$ operations and thus carries a large hidden constant that is polylogarithmic in precision. The same machinery yields Frobenius selection (Algorithm~\ref{alg:frob_sel}), an $O(nkm)$ algorithm with a very small constant. Its guarantees in terms of $\normf{X_\cS^\dag X}$ are strong only for $k$ close to $m$, and it is not a form of volume sampling for $k > m$; yet in our experiments, it was the fastest way to obtain a submatrix with near-best $\normf{X_\cS^\dag X}$.

Finally, our numerical experiments in optimal experimental design, sensor placement, and DLRA-DEIM suggest the following practical guidelines.
\begin{enumerate}
    \item The choice of algorithm should be driven by the target metric ($\normf{X_\cS^\dag X}$, $\norms{X_\cS^\dag X}$, $(\vol(X)/\vol(X_\cS))^{1/m}$, etc.), since algorithms whose greedy step targets a given objective tend to perform best in it.
    \item In terms of $\normf{X_\cS^\dag X}$, all tested deterministic algorithms performed comparably, with the relative spread between them not exceeding $20\%$. The practical question is therefore how quickly a good submatrix can be obtained, and here the newly developed Frobenius selection (Algorithm~\ref{alg:frob_sel}) is the clear leader. If tight theoretical bounds for $k > m$ are required, FDVS (Algorithm~\ref{alg:fdvs}) is the method of choice. An exception to this comparability is the regime of $k$ close to $m$, where Dominant~\cite{Osinsky23} can yield noticeably better results, as in the sensor placement and DLRA-DEIM experiments; this, however, is not universal, as the optimal experimental design experiment shows.
    \item The $\norms{X_\cS^\dag X}$ objective is harder: selectors can differ by a factor of almost three, and good results require Spectral selection~\cite{subset_spectral} or the cheaper but slightly worse GappyPOD+E~\cite{doi:10.1137/19M1307391}.
    \item Currently, the only algorithm that fully accounts for the nonorthonormality of $X$ is Frobenius removal (Algorithm~\ref{alg:frob_rem}), which yields better results in terms of $\normf{X_\cS^\dag} / \normf{X^\dag}$ and $\norms{X_\cS^\dag} / \norms{X^\dag}$ on highly nonorthonormal matrices. Whether this justifies its $O(nm^2 + nm(n - k))$ complexity depends on the application.
\end{enumerate}

Overall, volume sampling provides a common framework for designing fast subset selection algorithms with strong guarantees in terms of $\normf{X_\cS^\dag X}$, and in our experiments algorithms built on it were almost always the best in this metric. By making volume sampling and its derandomization asymptotically cheaper, in particular linear in $n$, this work extends these approaches to regimes that were previously out of reach, such as selecting a small number of columns from extremely large matrices arising in optimal experimental design, sensor placement, and model order reduction.

\bibliographystyle{abbrv}
\bibliography{refs}

\appendix

\section{Proof of the volume identity}\label{app:frob_through_vol}

Here we prove identity~\eqref{eqn:frob_through_vol}, used in Section~\ref{sec:intro_vol} to motivate the bound~\eqref{eqn:maxvol_bound}. Throughout, $\cS$ has cardinality $k \in \overline{m, n}$ and $\rank X_\cS = m$.

\textbf{Base case $k = m$.} Without loss of generality, assume $\cS = \overline{1, m}$. Then
\[
\normf{X_\cS^\dag X}^2 = \sum_{j = 1}^n \norms{X_\cS^{-1} x_j}^2 \overset{(a)}{=} \sum_{j = 1}^n\sum_{r = 1}^m \frac{\vol^2(X_{\cS - r + s})}{\vol^2(X_\cS)} \overset{(b)}{=} \sum_{r \in \cS,\ s \notin \cS - r}\frac{\vol^2(X_{\cS - r + s})}{\vol^2(X_\cS)}\,,
\]
where $(a)$ follows from Cramer's rule, and $(b)$ follows from the fact that all volumes of matrices with repeated columns vanish. This is exactly~\eqref{eqn:frob_through_vol} for $k = m$.

\textbf{Induction step.} Let $k > m$ and assume~\eqref{eqn:frob_through_vol} holds for cardinality $k - 1$. Equation~2.2 in~\cite{derezinski2018reverse}, applied to $X_\cS$, yields
\begin{equation}\label{eqn:vol_removal_identity}
\normf{X_\cS^\dag X}^2 = \frac{1}{k - m + 1}\sum_{r \in \cS}\frac{\vol^2(X_{\cS - r})}{\vol^2(X_\cS)}\normf{X_{\cS - r}^\dag X}^2\,.
\end{equation}
The $(k - m + 1)^{-1}$ here is the normalization factor of volume sampling, see~\cite[Equation~2.1]{derezinski2018reverse}. Substituting~\eqref{eqn:frob_through_vol} into the right-hand side of~\eqref{eqn:vol_removal_identity} gives
\begin{multline*}
\normf{X_\cS^\dag X}^2 = \frac{1}{k - m + 1} \sum_{r \in \cS}\frac{\vol^2(X_{\cS - r})}{(k - m)\vol^2(X_{\cS})} \left(\sum_{r' \in \cS - r}\sum_{s \notin \cS - r - r'} \frac{\vol^2(X_{\cS - r - r' + s})}{\vol^2(X_{\cS - r})}\right)\\ - \frac{1}{k - m + 1} \sum_{r \in \cS}\frac{\vol^2(X_{\cS - r})}{(k - m)\vol^2(X_{\cS})}(k - m - 1)(n - k + 2 - m)\,,
\end{multline*}
where we assume that $0 \cdot \infty = 0$. Separating the terms with $s = r'$ and rearranging the sums yields
\begin{multline*}
\normf{X_\cS^\dag X}^2 = \frac{1}{k - m + 1} \sum_{r \in \cS,\ s \notin \cS - r}\frac{\vol^2(X_{\cS - r})}{(k - m)\vol^2(X_{\cS})} \left(\sum_{r' \in \cS - r} \frac{\vol^2(X_{\cS - r - r' + s})}{\vol^2(X_{\cS - r})}\right)\\ - \frac{1}{k - m + 1} \sum_{r \in \cS}\frac{\vol^2(X_{\cS - r})}{(k - m)\vol^2(X_{\cS})}\Bigl((k - m - 1)(n - k + 2 - m) - (k - 1)\Bigr)\,.
\end{multline*}

Finally, applying the generalized Cauchy--Binet formula~\cite[Equation~2.1]{derezinski2018reverse} gives
\[
\sum_{r' \in \cS - r}\vol^2(X_{\cS - r - r' + s}) = (k - m)\vol^2(X_{\cS - r + s}) - \vol^2(X_{\cS - r})\,,
\]
since the $(k - 1)$-element subsets of $\cS - r + s$ are $\cS - r$ itself and the sets $\cS - r - r' + s$ for $r' \in \cS - r$. We obtain~\eqref{eqn:frob_through_vol} for $\cS$ by
\[
(n - k + 1) + (k - m - 1)(n - k + 2 - m) - (k - 1) = (k - m)(n - k + 1 - m)\,,
\]
which completes the induction.

\end{document}